\newtheorem{theorem}{Theorem}[section]
\newtheorem{lemma}[theorem]{Lemma}
\newtheorem{proposition}[theorem]{Proposition}
\newtheorem{corollary}[theorem]{Corollary}
\theoremstyle{definition}
\newtheorem{remark}[theorem]{Remark}
\newtheorem*{remark*}{Remark}
\newcommand{\SA}{\textup{SA}}
\newcommand{\cb}{\textup{cb}}
\newcommand{\Han}{\operatorname{Han}}
\newcommand{\supp}{\operatorname{supp}}
\newcommand{\dist}{\operatorname{dist}}
\renewcommand{\MR}[1]{}
\title[von Neumann's inequality on the polydisc]{On von Neumann's inequality on the polydisc}
\author[M. Hartz]{Michael Hartz}
\address{Fachrichtung Mathematik, Universit\"at des Saarlandes, 66123 Saarbr\"ucken, Germany}
\email{hartz@math.uni-sb.de}
\thanks{The author was partially supported by the
Emmy Noether Program of the German Research Foundation (DFG Grant 466012782)}
\subjclass[2010]{Primary 47A13; Secondary 47A30, 47A60}
\keywords{von Neumann's inequality, And\^o's inequality, commuting tuples of contractions, polydisc, Besov norm}
\begin{document}

\begin{abstract}
  Given a $d$-tuple $T$ of commuting contractions on Hilbert space and a polynomial $p$ in $d$-variables,
  we seek upper bounds for the norm of the operator $p(T)$.
  Results of von Neumann and And\^o show that if $d=1$ or $d=2$, the upper bound $\|p(T)\| \le \|p\|_\infty$,
  holds, where the supremum norm is taken over the polydisc $\mathbb{D}^d$.
  We show that for $d=3$, there exists a universal constant $C$ such that $\|p(T)\| \le C \|p\|_\infty$
  for every homogeneous polynomial $p$. We also show that for general $d$ and arbitrary polynomials, the norm $\|p(T)\|$
  is dominated by a certain Besov-type norm of $p$.
\end{abstract}

\maketitle

\section{Introduction}

A famous inequality of von Neumann \cite{Neumann51} shows that if
$T$ is a contraction on a Hilbert space $\mathcal{H}$,
then
\begin{equation*}
  \|p(T)\| \le \sup_{z \in \mathbb{D}} |p(z)|
\end{equation*}
for every polynomial $p \in \mathbb{C}[z]$. This inequality is the basis 
of an important connection between operator theory and complex analysis;
see for instance \cite{AMY20,SFB+10}.
And\^o \cite{Ando63} extended von Neumann's inequality to two variables. His inequality shows
that if $T = (T_1,T_2)$ is a pair of commuting contractions on Hilbert space, then
\begin{equation*}
  \|p(T)\| \le \sup_{z \in \mathbb{D}^2} |p(z)|
\end{equation*}
for all polynomials $p \in \mathbb{C}[z_1,z_2]$. However, the corresponding inequality for three or more commuting contractions is false, as examples of Kaijser--Varopoulos \cite{Varopoulos74} and Crabb--Davie \cite{CD75} show.
More background information can be found in the books \cite{Paulsen02,Pisier01} and in \cite{BB13}.

Even though the counterexamples to von Neumann's inequality in three variables were discovered in the seventies,
many questions surrounding this phenomenon were only answered recently or remain open.
For instance, the smallest dimension of a Hilbert space on which there exist counterexamples in three variables
was only determined a few years ago: there exist counterexamples in dimension four due to Holbrook \cite{Holbrook01},
whereas Knese showed that the inequality holds in dimension three or less \cite{Knese16}; this relies on a result in complex geometry due to Kosi\'nski \cite{Kosinski15}.
See also \cite{CD13} for a related counterexample in dimension three.

Remarkably, it is still not known if von Neumann's inequality for three commuting contractions holds up to a constant; see for instance \cite[Chapter 1]{Pisier01} and \cite[Chapter 5]{Paulsen02} for a detailed discussion of this problem.
Part of the difficulty comes from the lack of a convenient model for tuples of commuting contractions,
unlike in the setting of operator tuples associated with the Euclidean ball; see \cite{Hartz17}.
See also \cite{MT10} for a non-commutative approach.

To study whether von Neumann's inequality holds up to a constant, one defines $C(d,n) \in [1,\infty)$ to be the smallest constant such that
\begin{equation*}
  \|p(T)\| \le C(d,n) \sup_{z \in \mathbb{D}^d} |p(z)|
\end{equation*}
holds for every homogeneous polynomial $p \in \mathbb{C}[z_1,\ldots,z_d]$ of degree $n$ and every $d$-tuple $T$ of commuting contractions on Hilbert space.
By von Neumann's and And\^o's inequalities, $C(1,n) = C(2,n) = 1$ for all $n \in \mathbb{N}$.
Dixon \cite{Dixon76} showed that for $n \ge 2$,
\begin{equation}
  \label{eqn:Dixon_upper_bound}
  C(d,n) \le G_{\mathbb{C}} (3 d)^{(n-2)/2} (2 e)^n,
\end{equation}
where $G_{\mathbb{C}}$ is the complex Grothendieck constant, which satisfies $G_{\mathbb{C}} < \frac{3}{2}$;
see \cite{Haagerup87}.
He also proved that for fixed $n$ as $d \to \infty$, this estimate is not too far from optimal.
Explicitly, up to a constant depending on $n$, he established a lower bound for $C(d,n)$ of the form
$d^{\frac{1}{2} \lfloor (n-1)/2 \rfloor}$.
See also \cite{GR18} for some recent work on determining the value of $\lim_{d \to \infty} C(d,2)$.
However, as Dixon already remarked, the asymptotic behavior of $C(d,n)$ as $d \to \infty$ does not directly bear on the question
of whether von Neumann's inequality for $d$ commuting contractions holds up to a constant. Indeed, for this question,
the behavior of $C(d,n)$ for fixed $d$ as $n \to \infty$ is relevant.

For fixed $d$, Dixon's upper bound \eqref{eqn:Dixon_upper_bound} is exponential in the degree $n$. However,
it is easy to obtain upper bounds for $C(d,n)$ that are polynomial in $n$. For instance,
if $p(z) = \sum_{\alpha} \widehat{p}(\alpha) z^\alpha$ is homogeneous of degree $n$,
then for any $d$-tuple of commuting contractions, the Cauchy--Schwarz inequality shows that
\begin{align*}
  \|p(T)\| \le \sum_{\alpha} |\widehat{p}(\alpha)| &\le \binom{d+n-1}{d-1}^{1/2} \Big( \sum_{\alpha} |\widehat{p}(\alpha)|^2 \Big)^{1/2} \\
                                                   &\le \binom{d+n-1}{d-1}^{1/2} \sup_{z \in \mathbb{D}^d} |p(z)|.
\end{align*}
Here, the binomial coefficient is the dimension of the space of homogeneous polynomials in $d$ variables of degree $n$.
This gives the upper bound $C(d,n) \lesssim_d (n+1)^{(d-1)/2}$. Here and in the sequel, we write $f \lesssim_d g$ to mean
that there exists a constant $C$ depending only on $d$ such that $f \le C g$.

Our first main result gives an upper bound that is polylogarithmic in the degree $n$
and in particular yields that $C(3,n)$ is uniformly bounded in $n$.

\begin{theorem}
  \label{thm:main1}
  Let $d \ge 3$. Then for all $n \ge 1$,
  \begin{equation*}
    C(d,n) \lesssim_d (\log(n+1))^{d-3}.
  \end{equation*}
  In particular, $\sup_{n} C(3,n) < \infty$.
\end{theorem}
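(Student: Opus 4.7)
The structure of the bound---no logarithmic loss at $d=3$ and one additional $\log(n+1)$ factor per further variable---suggests treating $d=3$ as a genuine base case and extending to $d\ge 4$ by a dyadic induction in which each additional variable costs one $\log(n+1)$. For the base case, write $p(z) = \sum_{k=0}^{n} z_3^k p_k(z_1,z_2)$ and set $S_k = p_k(T_1,T_2)$. By And\^o's inequality, $\|S_k\| \le \|p_k\|_\infty$, and the $S_k$ commute with $T_3$, so $p(T) = \sum_k T_3^k S_k$. The goal is to bound this operator-valued polynomial evaluated at the single contraction $T_3$ by $\sup_{|w|=1} \|\sum_k w^k S_k\| = \sup_{|w|=1} \|p(T_1,T_2,w)\|$, which by another application of And\^o is at most $\|p\|_\infty$.

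For the inductive step $d-1 \to d$, I would fix a homogeneous polynomial $p$ of degree $n$ in $d$ variables and decompose it dyadically in $z_d$: $p = \sum_{j=0}^{\lceil \log_2 n \rceil} p^{(j)}$, where $p^{(j)}$ collects the monomials of $z_d$-degree in $[2^{j-1}, 2^j)$; there are $O(\log(n+1))$ nontrivial blocks. The per-block estimate $\|p^{(j)}(T)\| \lesssim_d (\log(n+1))^{d-4} \|p^{(j)}\|_\infty$ should follow from the inductive hypothesis applied to the coefficient polynomials in $z_1,\ldots,z_{d-1}$, combined with the same single-variable operator-valued argument used in the base case---now available because the dyadic localisation in $z_d$ forces the frequencies in $T_d$ to lie on a single scale, which is the key rigidity one needs. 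Since $\|p^{(j)}\|_\infty \lesssim \|p\|_\infty$ by a standard partial-sum projection estimate on the torus, summing over the $O(\log(n+1))$ blocks yields the claimed $(\log(n+1))^{d-3}$ factor.

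The main obstacle is the base case $d=3$. Parrott's obstruction to joint unitary dilations means that no naive operator-valued von Neumann inequality for a single contraction with commuting coefficients can be available with a universal constant---such an inequality would immediately yield the (false) three-variable von Neumann inequality. The proof must therefore exploit the very specific structural constraint that the family $(S_k)_{k=0}^{n}$ arises from a single polynomial in two commuting contractions, not from an arbitrary sequence of operators in the commutant of $T_3$. I expect this to be achieved via an argument on the torus $\mathbb{T}^3$: perhaps a Hankel/Toeplitz-type bound controlling $\|\sum_k T_3^k S_k\|$ by a suitable weighted $L^2(\mathbb{T})$-norm of the operator-valued symbol $w \mapsto q(w) = p(T_1,T_2,w)$, which in turn is controlled by the supremum norm at the cost of an absolute constant (but not a $\log$) by exploiting the homogeneity of $p$ and another invocation of And\^o's inequality uniformly in $w$.
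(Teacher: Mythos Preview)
Your proposal has a genuine gap: you correctly identify $d=3$ as the crux but do not resolve it, and your speculation about how to proceed is off target. The paper does \emph{not} exploit any structure of the coefficients $S_k=p_k(T_1,T_2)$ beyond the fact that they commute with $T_3$; the operator-valued inequality with commuting coefficients is applied as a black box, and it really does cost $K(0,n)\simeq\log(n+1)+1$ in general. The logarithm is saved by a different mechanism.

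The missing idea is a splitting lemma (Lemma~\ref{lem:splitting}): a homogeneous polynomial $p$ of degree $n$ can be written as $p=\sum_{j=1}^d p_j$ where each $p_j$ is $(\lfloor n/(2d)\rfloor,n)$-band-limited \emph{in the single variable $z_j$} and $\|p_j\|_\infty\lesssim_d\|p\|_\infty$. This is a smoothed pigeonhole argument---if $|\alpha|=n$ then some $\alpha_j\ge n/d$---made bounded in sup norm via de la Vall\'ee-Poussin kernels. For each piece $p_j$ one evaluates the other $d-1$ variables first, which costs $\log(n+1)^{d-3}$ by the straightforward induction from And\^o (Proposition~\ref{prop:SA_log_bound}), and then plugs in $T_j$; since $p_j$ is $(\lfloor n/(2d)\rfloor,n)$-band-limited in $z_j$, this last step costs only $K(\lfloor n/(2d)\rfloor,n)\lesssim_d 1$, with no logarithm. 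Thus $d=3$ is not a separate base case at all: the base is And\^o throughout, and the splitting saves exactly one logarithm over the naive $\log(n+1)^{d-2}$ bound. Your ``single-scale rigidity'' intuition is right, but it should be applied once, to a variable $z_j$ that depends on the piece, rather than always to $z_d$ inside an induction. (Your inductive step has a secondary issue as well: for fixed $z_d$ the block $p^{(j)}(\cdot,z_d)$ is band-limited but not homogeneous in the remaining $d-1$ variables, so the constant $C(d-1,\cdot)$, which is defined only for homogeneous polynomials, does not apply directly; one would need the band-limited estimate of Theorem~\ref{thm:SA_band_limited_estimate} as the inductive hypothesis, and that again rests on the splitting lemma.)
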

This result will be proved in Corollary \ref{cor:C(d,n)_upper_bound}.
The proof yields the crude explicit upper bound $C(3,n) \le 121$, see Remark \ref{rem:explicit}, but no attempt was made to optimize the numerical bound.

It is important to keep in mind that $\sup_n C(3,n) < \infty$ does not imply that von Neumann's inequality
for three commuting contractions
holds up to a constant, as $C(3,n)$ is only defined using homogeneous polynomials.
Nonetheless, the known counterexamples to von Neumann's inequality in three variables
all use homogeneous polynomials, and show in particular that $C(3,2) > 1$. (The best known
lower abound appears to be $C(3,2) \ge \frac{1}{3} \sqrt{\frac{35+13\sqrt{13}}{6}} \approx 1.23$; see \cite[Proposition 6.1]{GKW13}.)

In \cite[Chapter 4]{Pisier01}, Pisier gives an exposition of work of Daher, which studies von Neumann's inequality
for tuples of commuting $N \times N$ matrices; see \cite[Corollary 4.21]{Pisier01}.
In particular, modifying arguments of Bourgain \cite{Bourgain86}, this work
shows that von Neumann's inequality holds up to a factor of $(\log(N+1))^d$ in this context.
Such modifications of Bourgain's arguments can also be used to establish the upper bound $C(d,n) \lesssim_d (\log(n+1))^{d}$.

Given a not necessarily homogeneous polynomial $p \in \mathbb{C}[z_1,\ldots,z_d]$, the Schur--Agler norm of $p$ is defined to be
\begin{equation*}
  \|p\|_{\SA} = \sup \{ \|p(T)\|\},
\end{equation*}
where the supremum is taken over all $d$-tuples $T$ of commuting contractions on Hilbert space.
It is natural to seek function theoretic upper bounds for the Schur--Agler norm.
To this end, recall that if $f: \mathbb{D}^d \to \mathbb{C}$ is holomorphic,
the radial derivative of $f$ is $(R f)(z) = \sum_{j=1}^d z_j \frac{\partial f}{\partial z_j}(z)$.
Let also write $f_r(z) = f(r z)$ and use $\|\cdot\|_\infty$ to denote the supremum norm on $\mathbb{D}^d$.
We now have the following Besov-type upper bound for the Schur--Agler norm.

\begin{theorem}
  \label{thm:main2}
  Let $d \ge 3$. Then for all $p \in \mathbb{C}[z_1,\ldots,z_d]$,
  \begin{equation*}
    \|p\|_{\SA} \lesssim_d
    |p(0)| + \int_{0}^1 \|(R p)_r\|_\infty \Big( \log\Big(\frac{1}{1-r} \Big) \Big)^{d - 3} \, dr.
  \end{equation*}
\end{theorem}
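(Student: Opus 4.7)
The plan is to deduce Theorem \ref{thm:main2} from Theorem \ref{thm:main1} via a Littlewood--Paley decomposition in the degree variable. I would start from the reproducing identity
\begin{equation*}
  p(z) - p(0) = \int_0^1 (Rp)_r(z) \, \frac{dr}{r},
\end{equation*}
obtained by differentiating $p(rz) = \sum_n r^n p_n(z)$ in $r$; evaluated on a commuting contraction tuple $T$ it gives $\|p(T) - p(0) I\| \le \int_0^1 \|(Rp)_r(T)\| \, dr/r$. To handle the integrand, I would invoke a \emph{block} version of Theorem \ref{thm:main1}: for any polynomial $q$ whose homogeneous parts all have degree in a dyadic range $[N, 2N]$, $\|q\|_{\SA} \lesssim_d \log(N+1)^{d-3} \|q\|_\infty$. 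Such a block estimate should come essentially for free from the proof of Theorem \ref{thm:main1}, since the factorization-type arguments behind polylogarithmic bounds typically treat all homogeneous pieces of comparable degree on equal footing.

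I would then fix a de la Vall\'ee Poussin-type Fourier multiplier $V_N$ on polynomials (a smooth frequency cutoff acting as identity on homogeneous parts of degree $\le N$, vanishing above $2N$, with kernel uniformly $L^1(\mathbb{T})$-bounded) and set $\Delta_k := V_{2^{k+1}} - V_{2^k}$. Each $\Delta_k q$ is a block polynomial of scale $2^k$ with $\|\Delta_k q\|_\infty \lesssim \|q\|_\infty$, and one has $p = V_1 p + \sum_{k \ge 0} \Delta_k p$. Since $\Delta_k$ annihilates constants, applying it to the reproducing identity yields $\Delta_k p = \int_0^1 \Delta_k(Rp)_r \, dr/r$, and the block estimate gives
\begin{equation*}
  \|\Delta_k(Rp)_r(T)\| \lesssim_d k^{d-3} \|\Delta_k(Rp)_r\|_\infty.
\end{equation*}
Summing over $k$, the Besov bound would reduce to the weighted Littlewood--Paley inequality
\begin{equation*}
  \sum_{k \ge 0} k^{d-3} \|\Delta_k (Rp)_r\|_\infty \lesssim_d r \cdot \log\bigl(1/(1-r)\bigr)^{d-3} \|(Rp)_r\|_\infty,
\end{equation*}
whose integral against $dr/r$ yields precisely the claimed Besov integral.

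The hard part is precisely this weighted Littlewood--Paley estimate. The uniform bound $\|\Delta_k(Rp)_r\|_\infty \lesssim \|(Rp)_r\|_\infty$ coming from the $L^1$-boundedness of $\Delta_k$ is too crude, since $\sum_{k \ge 0} k^{d-3}$ diverges. One must exploit the heuristic that the coefficients $n r^n$ of $(Rp)_r$ are localized near the ``effective degree'' $n \asymp 1/(1-r) \asymp 2^{k_r}$ with $k_r := \log_2(1/(1-r))$, so that $\Delta_k(Rp)_r$ is small unless $k \approx k_r$. For $k \gg k_r$, the coefficients in the block $[2^k, 2^{k+2}]$ decay super-exponentially (of order $2^k e^{-2^{k-k_r}}$); for $k \ll k_r$, a finer Fourier-analytic argument combining the frequency support of $\Delta_k$ with the smallness of $n r^n$ at small $n$ is needed. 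Turning this heuristic into a bound $\|\Delta_k(Rp)_r\|_\infty \lesssim \phi(k - k_r) \|(Rp)_r\|_\infty$ with $\phi$ decaying rapidly enough that $\sum_k k^{d-3} \phi(k - k_r) \lesssim k_r^{d-3}$ is the principal technical task.
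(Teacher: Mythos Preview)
Your overall architecture is exactly the paper's: a dyadic block decomposition of $p$, a block Schur--Agler estimate (this is Theorem~\ref{thm:SA_band_limited_estimate} in the paper; it does \emph{not} come for free from the homogeneous case but needs the splitting Lemma~\ref{lem:splitting}), and then an equivalence between $\sum_k k^{d-3}\|\Delta_k p\|_\infty$ and the Besov integral. The gap is in your route to the equivalence.

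The pointwise weighted Littlewood--Paley inequality you single out as ``the principal technical task'' is false. Take $p(z)=z_1^N$. Then $(Rp)_r = N r^N z_1^N$ is supported on the single frequency $N$, so $\|\Delta_k(Rp)_r\|_\infty = \|(Rp)_r\|_\infty$ for $k\approx\log_2 N$, and your left-hand side is $\approx(\log N)^{d-3}\|(Rp)_r\|_\infty$, while your right-hand side is $\approx\log(1/(1-r))^{d-3}\|(Rp)_r\|_\infty$. For fixed $r$ and $N\to\infty$ this blows up. The error in your heuristic is that ``the coefficients $nr^n$ of $(Rp)_r$ are localized near $n\asymp 1/(1-r)$'' describes only the Fourier \emph{multiplier} $n\mapsto nr^n$, not the actual coefficients of $(Rp)_r$, which also carry the unknown coefficients of $p$. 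Since for fixed $r$ the map $p\mapsto (Rp)_r$ is onto all polynomials vanishing at $0$, no pointwise-in-$r$ bound of the form $\|(Rp)_r\|_{\SA}\lesssim_d \psi(r)\|(Rp)_r\|_\infty$ can hold unless von Neumann's inequality holds up to a constant in $d$ variables.

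What survives is the \emph{integrated} inequality, and the paper proves it (Proposition~\ref{prop:Besov_equiv}, Corollary~\ref{cor:Besov_polydisc}) by reversing the order of the argument: for each dyadic block index $k$, restrict the $r$-integral to $[1-2^{-k},1-2^{-k-1}]$, use $\|(Rp)_r\|_\infty\gtrsim\|\Delta_k(Rp)_r\|_\infty$ there, and then apply forward and reverse Bernstein inequalities to the band-limited piece $\Delta_k p$ to see that this slice of the integral already dominates $k^{d-3}\|\Delta_k p\|_\infty$. Summing over $k$ gives $\sum_k k^{d-3}\|\Delta_k p\|_\infty\lesssim_d$ Besov integral, and combining with the block estimate finishes the proof. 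Your detour through $\Delta_k(Rp)_r$ and the integral identity is unnecessary: one can apply the block estimate directly to $\Delta_k p$ and then invoke the Besov equivalence.
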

This result will be proved in Corollary \ref{cor:Besov_upper_bound}.

The idea to use Besov norms in the context of functional calculi already appeared in the seminal work of Peller \cite{Peller82} on polynomially bounded operators. Indeed, this article takes inspiration from Peller's work and subsequent works, for instance of Vitse \cite{Vitse05} and Schwenninger \cite{Schwenninger16}.

Our methods also yield some results about the Schur--Agler norm with constant $1$.
It was shown by Knese \cite{Knese11a} that for certain rational inner functions $f: \mathbb{D}^3 \to \mathbb{C}$,
one can find a monomial $q$ such that $\|q f\|_{\SA} = \|f\|_\infty$.
(See the disussion preceding Corollary \ref{cor:Besov_upper_bound} for the definition of Schur--Agler
norm of a holomorphic function.) We will show an asymptotic version of Knese's theorem for polynomials.
Grinshpan, Kaliuzhnyi-Verbovetskyi and Woerdeman proved that there exist polynomials $p \in \mathbb{C}[z_1,z_2,z_3]$
such that $\|z_3 p\|_{\SA} < \|p\|_{\SA}$; see \cite[Theorem 2.3]{GKW14}.
They also asked if for every polynomial $p$ satisfying $\|p\|_\infty < \|p\|_{\SA}$,
there exists a monomial $q$ such that $\|q p\|_{\SA} < \|p\|_{\SA}$;
see \cite[Problem 2.4]{GKW14}.
The following result answers this question in the affirmative;
it also gives another proof of the existence of a polynomial $p \in \mathbb{C}[z_1,z_2,z_3]$ with $\|z_3 p\|_{\SA} < \|p\|_{\SA}$.

\begin{theorem}
  \label{thm:main3}
  Let $d \ge 3$ and let $p \in \mathbb{C}[z_1,\ldots,z_d]$. Then
  \begin{equation*}
    \lim_{m \to \infty} \|(z_3 \cdot \ldots \cdot z_d)^m p\|_{\SA} = \|p\|_\infty.
  \end{equation*}
\end{theorem}

This result will be proved in Theorem \ref{thm:SA_sup_asympt}.

On our way to establishing Theorems \ref{thm:main1} and \ref{thm:main2}, we first study a version of the one-variable
von Neumann inequality, but for polynomials with operator coefficients satisfying a commutativity hypothesis; see Section \ref{sec:op_coeff} for precise details. The idea to use one-variable polynomials with operator coefficients to study the scalar von Neumann inequality in several
variables already appears in work of Daher; see \cite[Chapter 4]{Pisier01}.
In Theorem \ref{thm:OVNI_sharp}, which may be of independent interest, we establish fairly precise upper and lower bounds for von Neumann's inequality with operator coefficients.
However, we will see in Propositions \ref{prop:Hankel_constant_1} and \ref{prop:Hankel_general_constant}
that the operators yielding logarithmic lower bounds in von Neumann's inequality with operator coefficients
satisfy von Neumann's inequality for $d$-tuples up to a constant.

The remainder of this article is organized as follows. Section \ref{sec:op_coeff} contains the material on von Neumann's inequality with operator coefficients. Section \ref{sec:polydisc} deals with von Neumann's inequality on the polydisc. Moreover, in Appendix \ref{sec:Besov}, we collect a few basic facts about analytic Besov spaces.

\subsection*{Acknowledgements} The author is grateful to Bernhard Hack for pointing out the work of Markus Haase \cite{Haase11}. The author would also like to thank the anonymous referee for a close reading,
for numerous valuable comments and in particular for pointing out reference \cite{GKW14},
which led to the formulation of Theorem \ref{thm:main3} for general $d \ge 3$, instead of only $d=3$.

\section{Polynomials with operator coefficients}
\label{sec:op_coeff}

In this section, we consider operator-valued polynomials of the form 
\begin{equation*}
  p(z) = \sum_{k=m}^n A_k z^k,
\end{equation*}
where each $A_k \in B(\mathcal{H})$.
We call such polynomials $(m,n)$-band-limited.
Given another operator $T \in B(\mathcal{H})$, we ``evaluate'' the polynomial
at $T$ as follows:
\begin{equation*}
  p(T) = \sum_{k=m}^n A_k T^k.
\end{equation*}
Notice that the product is not the tensor product, but composition of operators.
(Using the tensor product, one obtains von Neumann's inequality with constant $1$ by Sz.-Nagy's dilation theorem.)

We are mostly concerned with the case when each $A_k$ commutes with $T$.
Equivalently,
$p(z)$ commutes with $A_k$ for all $z \in \mathbb{D}$.
We seek bounds on $\|p(T)\|$ in terms of $\sup_{z \in \mathbb{D}} \|p(z)\|$.
In the case $m=0$, this problem was already studied by Daher and Pisier; see \cite[Chapter 4]{Pisier01}.
To get a feeling for the problem, we first consider the easier case without
any assumption on commutation.

Throughout, we will use $\mathbb{T}$ to denote the unit circle, and we write 
\begin{equation*}
  \int_{\mathbb{T}} f(z) \frac{|dz|}{2 \pi} = \int_{0}^{2 \pi} f(e^{it}) \frac{dt}{2 \pi}.
\end{equation*}

\begin{proposition}
  Let $p(z) = \sum_{k=m}^n A_k z^k$ be a polynomial with operator coefficients and let $\|T\| \le 1$. Then
  \begin{equation*}
    \Big\| \sum_{k=m}^n A_k T^k \Big\| \le \sqrt{n-m+1} \sup_{z \in \mathbb{D}} \| p(z)\|.
  \end{equation*}
  Moreover, the factor $\sqrt{n-m+1}$ is best possible in the sense that for all $1 \le m \le n$,
  there exists a non-zero choice of $A_m,\ldots,A_n$ and $T$ such that equality holds.
\end{proposition}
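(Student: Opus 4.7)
The plan has two parts: a Fourier-analytic proof of the upper bound, and a rank-one construction showing sharpness.

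\textbf{Upper bound.} I begin from the recovery formula $A_k = \int_\mathbb{T} p(z) \bar z^k \, |dz|$, which immediately gives the identity
\[ p(T) = \int_\mathbb{T} p(z) \, Q(z) \, |dz|, \qquad Q(z) := \sum_{k=m}^n \bar z^k T^k, \]
noting that the non-commutation of $A_j$ and $T^k$ causes no trouble because they are separated by the scalars $z^j \bar z^k$. For a unit vector $h \in \mathcal{H}$, two applications of Cauchy--Schwarz in $L^2(\mathbb{T})$ give
\[ \|p(T) h\|^2 \le \int_\mathbb{T} \|p(z) Q(z) h\|^2 \, |dz| \le \Big(\sup_{z \in \mathbb{D}} \|p(z)\|\Big)^2 \int_\mathbb{T} \|Q(z) h\|^2 \, |dz|. \]
Parseval's identity applied to the orthonormal family $\{\bar z^k\}$ in $L^2(\mathbb{T})$ then yields
\[ \int_\mathbb{T} \|Q(z) h\|^2 \, |dz| = \sum_{k=m}^n \|T^k h\|^2 \le (n-m+1)\|h\|^2, \]
since $\|T\| \le 1$. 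Chaining these displays gives the stated bound.

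\textbf{Sharpness.} Fix $1 \le m \le n$. Let $T$ be the unilateral shift on $\ell^2(\mathbb{Z}_{\ge 0})$, with standard basis $\{e_j\}_{j \ge 0}$, so that $T^k e_0 = e_k$ is orthonormal. Choose any unit vector $v \in \ell^2$, and for $m \le k \le n$ define the rank-one operator
\[ A_k w := \frac{1}{n-m+1} \langle w, e_k \rangle \, v. \]
Then $p(T) e_0 = \frac{1}{n-m+1} \sum_{k=m}^n v = v$, whence $\|p(T)\| \ge 1$. On the other hand, $p(z) w = \frac{1}{n-m+1} \langle w, \sum_{k=m}^n \bar z^k e_k \rangle v$ is rank one with operator norm
\[ \|p(z)\| = \frac{1}{n-m+1} \Big(\sum_{k=m}^n |z|^{2k}\Big)^{1/2}, \]
whose supremum over $\mathbb{D}$ equals $\frac{1}{\sqrt{n-m+1}}$. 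Therefore $\|p(T)\| \ge 1 = \sqrt{n-m+1} \sup_{\mathbb{D}} \|p\|$, so equality holds throughout.

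I do not anticipate a significant obstacle. The upper bound is the standard Parseval-type estimate for polynomials with operator coefficients, and the sharpness construction is essentially forced by the equality cases in the proof: one needs $\{T^k h\}_{k=m}^n$ to be orthonormal, which calls for an isometric shift orbit, and $p(z) Q(z) h$ to be a single unit vector attaining $\|p(z)\|$ at each $z$, which pins down the rank-one form of the $A_k$.
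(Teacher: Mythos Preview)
Your proof is correct and essentially matches the paper's. The paper phrases the upper bound via the row--column factorization $p(T)=\begin{bmatrix}A_m&\cdots&A_n\end{bmatrix}\begin{bmatrix}T^m\\ \vdots\\ T^n\end{bmatrix}$ (with $\sum_k A_kA_k^*\le I$ obtained from the same integration you use), and its sharpness example $A_k=E_{1,k+1}$ with $T$ the unilateral shift is exactly your rank-one construction up to normalization and an index shift.
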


\begin{proof}
  We may normalize so that $\sup_{z \in \mathbb{D}} \|p(z)\| = 1$.
  Then
  \begin{equation*}
    I \ge \int_{\mathbb{T}} p(z) p(z)^* \frac{|dz|}{2 \pi} = \sum_{k=m}^n A_k A_k^*.
  \end{equation*}
  Hence the row $
  \begin{bmatrix}
    A_m & \cdots & A_n
  \end{bmatrix}$ is a contraction. Then
  \begin{equation*}
    \sum_{k=m}^n A_k T^k =
    \begin{bmatrix}
      A_m & \cdots & A_n
    \end{bmatrix}
    \begin{bmatrix}
      T^m \\ T^{m+1} \\ \vdots \\ T^n
    \end{bmatrix}.
  \end{equation*}
  The column has norm at most $\sqrt{n-m+1}$, from which the upper bound follows.

  To see that equality may hold, let $A_k = E_{1,k+1}$ (matrix units on $\ell^2$) and let $T$ be the unilateral shift. Then
  \begin{equation*}
    \sum_{k=m}^n A_k T^k e_1 = \sum_{k=m}^n E_{1, k+1} e_{k+1} = (n-m+1) e_1,
  \end{equation*}
  hence $\| \sum_{k=m}^n A_k T^k \| \ge n-m+1$,
  but $p(z) p(z)^* = \sum_{k=m}^n E_{1,1} |z|^{2 k}$, so that $\|p\|_\infty = \sqrt{n-m+1}$.
\end{proof}

\begin{remark}
  The proof of the upper bound in fact applies to power bounded operators.
\end{remark}

Recall that $T$ is said to doubly commute with $A$ if $T$ commutes with $A$ and $A^*$.
In the doubly commuting case, one obtains the inequality with constant $1$.
It appears that this was first shown by Arveson and Parrott (unpublished) and Mlak \cite{Mlak71}.

\begin{proposition}
  Let $p(z) = \sum_{k=0}^n A_k z^k$ be a polynomial with operator coefficients and let $T \in B(\mathcal{H})$ with
  $\|T\| \le 1$. If $T$ doubly commutes with each $A_k$, then
  \begin{equation*}
    \Big\| \sum_{k=0}^n A_k T^k \Big\| \le \sup_{z \in \mathbb{D}} \|p(z)\|.
  \end{equation*}
\end{proposition}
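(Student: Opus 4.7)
The plan is to dilate $T$ to a unitary $U$ and simultaneously lift each $A_k$ to an operator $\tilde{A}_k$ on the dilation space that commutes with $U$, so that the problem reduces to the operator-valued functional calculus for a unitary. Concretely, by Sz.-Nagy's dilation theorem there is a unitary $U$ on a Hilbert space $\mathcal{K} \supseteq \mathcal{H}$ with $T^n = P_{\mathcal{H}} U^n|_{\mathcal{H}}$ for all $n \ge 0$. The double commutation hypothesis forces each $A_k$, via continuous functional calculus, to commute with the defect operators $D_T = (I - T^*T)^{1/2}$ and $D_{T^*} = (I - T T^*)^{1/2}$, and hence to preserve the defect spaces $\mathcal{D}_T$ and $\mathcal{D}_{T^*}$. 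In the Sz.-Nagy minimal dilation whose underlying space has the form
\[ \mathcal{K} = \ell^2(\mathbb{Z}_{<0}, \mathcal{D}_{T^*}) \oplus \mathcal{H} \oplus \ell^2(\mathbb{Z}_{>0}, \mathcal{D}_T), \]
this invariance lets me define a block-diagonal extension $\tilde{A}_k$ acting coordinatewise. A direct check against the explicit formulas for $U$ gives $\tilde{A}_k U = U \tilde{A}_k$ (and $\tilde{A}_k U^* = U^* \tilde{A}_k$), and the block-diagonal form yields $\tilde{A}_k P_{\mathcal{H}} = P_{\mathcal{H}} \tilde{A}_k$, $\tilde{A}_k|_{\mathcal{H}} = A_k$, and $\|\tilde{A}_k\| = \|A_k\|$.

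Setting $p_\sim(z) := \sum_{k=0}^n \tilde{A}_k z^k$, the same block-diagonal structure gives $\|p_\sim(z)\| = \|p(z)\|$ for every $z$, while the listed commutation relations together with $P_{\mathcal{H}} U^k|_{\mathcal{H}} = T^k$ yield $P_{\mathcal{H}} p_\sim(U)|_{\mathcal{H}} = p(T)$, so $\|p(T)\| \le \|p_\sim(U)\|$. It remains to show $\|p_\sim(U)\| \le \sup_{z \in \mathbb{T}} \|p_\sim(z)\|$, at which point the spectral theorem for the unitary $U$ takes over. Let $E$ denote its spectral measure; since each $\tilde{A}_k$ lies in $\{U, U^*\}'$, it commutes with every $E(\Omega)$. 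Partition $\mathbb{T}$ into arcs $I_l$ of length less than $\varepsilon$, pick $\zeta_l \in I_l$, and set $P_l := E(I_l)$. Each $P_l$ commutes with $p_\sim(U)$, so $\|p_\sim(U)\| = \sup_l \|p_\sim(U) P_l\|$. On $P_l \mathcal{K}$ one has $\|(U^k - \zeta_l^k) P_l\| \le k \varepsilon$, whence $\|p_\sim(U) P_l - p_\sim(\zeta_l) P_l\| \le C_p \varepsilon$ for a constant $C_p$ depending only on $p_\sim$, and therefore
\[ \|p_\sim(U) P_l\| \le \|p_\sim(\zeta_l)\| + C_p \varepsilon \le \sup_{z \in \mathbb{T}} \|p(z)\| + C_p \varepsilon. \]
Letting $\varepsilon \to 0$ and invoking the (vector-valued) maximum modulus principle to pass from $\mathbb{T}$ to $\mathbb{D}$ completes the argument.

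The main obstacle is the lifting step in the first paragraph: producing $\tilde{A}_k$'s that commute with the unitary dilation $U$. Double commutation is indispensable here, as it is precisely what guarantees invariance of the defect subspaces $\mathcal{D}_T$ and $\mathcal{D}_{T^*}$; without it, the coordinatewise extension fails to intertwine with $U$, and indeed (as the earlier proposition in this section shows) the conclusion itself can fail by a factor of order $\sqrt{n+1}$. Once the lifts are in place, the remaining argument is standard operator-valued spectral calculus for a unitary.
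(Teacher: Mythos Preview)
Your proof is correct and takes a genuinely different route from the paper's. The paper follows Heinz's approach to von Neumann's inequality: after reducing to $\|T\|<1$, it uses the operator Poisson kernel $P(z,T)=(1-zT^*)^{-1}+(1-\bar z T)^{-1}-I\ge 0$ to write
\[
p(T)=\int_{\mathbb{T}} p(z)\,P(z,T)\,|dz|=\int_{\mathbb{T}} P(z,T)^{1/2}\,p(z)\,P(z,T)^{1/2}\,|dz|,
\]
the second equality being exactly where double commutation enters (it lets $p(z)$ commute with $P(z,T)^{1/2}$). The resulting map $f\mapsto\int P^{1/2}fP^{1/2}$ is unital completely positive on $C(\mathbb{T},B(\mathcal H))$, hence contractive, and the inequality follows in one stroke.

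Your argument instead dilates: Sz.-Nagy's minimal unitary dilation $U$ on $\ell^2(\mathbb Z_{<0},\mathcal D_{T^*})\oplus\mathcal H\oplus\ell^2(\mathbb Z_{>0},\mathcal D_T)$, together with the fact that double commutation forces $A_k$ to commute with $D_T,D_{T^*}$ and hence preserve the defect spaces, lets you lift each $A_k$ block-diagonally to $\tilde A_k\in\{U,U^*\}'$. Your ``direct check'' that $\tilde A_kU=U\tilde A_k$ is indeed routine once one writes out the Sch\"affer matrix of $U$: its only nontrivial entries are $T$, $D_T$, $D_{T^*}$, and $-T^*\!\restriction_{\mathcal D_{T^*}}$, all of which intertwine with $A_k$ under the double-commutation hypothesis. (This lifting is a special case of a classical result of Sz.-Nagy--Foia\c{s} on extending doubly intertwining operators to the minimal dilation.) The spectral-theorem step is then clean, and your partition argument is a correct way to make the operator-coefficient functional calculus for $U$ precise.

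What each approach buys: the paper's Poisson-kernel proof is shorter, entirely self-contained, and avoids any dilation machinery; it also makes the role of complete positivity transparent. Your dilation proof is more structural---it explains the result as ``reduce to the unitary case, where it is the spectral theorem''---and it highlights that double commutation is precisely the hypothesis under which the commutant of $T$ lifts compatibly to the commutant of $U$. Both proofs use double commutation at exactly one pivotal point, and your closing remark correctly identifies that without it the inequality can fail by a factor $\sqrt{n+1}$.
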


\begin{proof}
  We use a small modification of a proof of von Neumann's inequality due to Heinz;
  see \cite{Heinz52} and also \cite[Exercise 2.15]{Paulsen02}.
  We may assume that $\|T\| < 1$. For $z \in \overline{\mathbb{D}}$, consider
  the Poisson-type kernel
  \begin{equation*}
    P(z,T) = (1 - z T^*)^{-1} + (1 - \overline{z} T)^{-1} - I.
  \end{equation*}
  A simple computation shows that $P(z,T) \ge 0$ for all $z \in \overline{\mathbb{D}}$
  and that
  \begin{equation*}
    \sum_{k=0}^n A_k T^k =  \int_{\mathbb{T}} p(z) P(z,T) \frac{|dz|}{2 \pi} =
    \int_{\mathbb{T}} P(z,T)^{1/2} p(z) P(z,T)^{1/2} \frac{|dz|}{2 \pi},
  \end{equation*}
  where the second equality follows from the doubly commuting assumption.
  Now, the map
  \begin{equation*}
    \Phi: C(\mathbb{T},B(\mathcal{H})) \to B(\mathcal{H}), \quad
    f \mapsto \int_{\mathbb{T}} P(z,T)^{1/2} f(z) P(z,T)^{1/2} \frac{|dz|}{2 \pi}
  \end{equation*}
  is unital and completely positive, hence completely contractive; see for instance \cite[Proposition 3.2]{Paulsen02}. In particular, $\|p(T)\| = \|\Phi(p)\| \le \|p\|_\infty$.
\end{proof}

To deal with the singly commuting case, we require the following routine application of the Cauchy--Schwarz inequality.

\begin{lemma}
  \label{lem:integral_cb}
  Let $X$ be a compact Hausdorff space, let $\mu$ be a Borel probability measure on $X$
 and let $K,L: X \to B(\mathcal{H})$ and let $f: X \to {B}(\mathcal{H})$ be norm continuous. Then
  \begin{align*}
    &\left\| \int_X K(x) f(x) L(x) \, d \mu(x) \right\| \\
    \le
    &\left\| \int_X K(x) K(x)^* \, d \mu(x) \right\|^{1/2}
    \left\| \int_X L(x)^* L(x) \, d \mu(x) \right\|^{1/2}
    \, \sup_{x \in X} \|f(x)\|.
  \end{align*}
\end{lemma}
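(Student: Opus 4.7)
The plan is to reduce the operator-norm inequality to a scalar inequality by testing against unit vectors, then apply Cauchy--Schwarz twice: first pointwise inside the integral, and then in the integrated form.

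Fix unit vectors $\xi, \eta \in \mathcal{H}$. I would begin by writing
\begin{equation*}
  \Big\langle \int_X K(x) f(x) L(x) \, d\mu(x) \, \xi, \eta \Big\rangle
  = \int_X \langle f(x) L(x) \xi, K(x)^* \eta \rangle \, d\mu(x),
\end{equation*}
using the weak definition of the Bochner integral (the norm continuity hypothesis guarantees that everything in sight makes sense). Setting $M = \sup_{x \in X} \|f(x)\|$ and applying the pointwise Cauchy--Schwarz inequality in $\mathcal{H}$ together with $\|f(x) v\| \le M \|v\|$, the integrand is bounded by $M \, \|L(x) \xi\| \, \|K(x)^* \eta\|$.

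Next, I would apply the Cauchy--Schwarz inequality for the scalar measure $\mu$ to obtain
\begin{equation*}
  \int_X \|L(x) \xi\| \, \|K(x)^* \eta\| \, d\mu(x)
  \le \Big( \int_X \|L(x) \xi\|^2 \, d\mu(x) \Big)^{1/2} \Big( \int_X \|K(x)^* \eta\|^2 \, d\mu(x) \Big)^{1/2}.
\end{equation*}
Now I would identify each scalar integral with a matrix element of the corresponding positive operator-valued integral: since $\|L(x) \xi\|^2 = \langle L(x)^* L(x) \xi, \xi \rangle$, we get
\begin{equation*}
  \int_X \|L(x) \xi\|^2 \, d\mu(x)
  = \Big\langle \Big( \int_X L(x)^* L(x) \, d\mu(x) \Big) \xi, \xi \Big\rangle
  \le \Big\| \int_X L(x)^* L(x) \, d\mu(x) \Big\|,
\end{equation*}
and analogously for $K$ using $\|K(x)^* \eta\|^2 = \langle K(x) K(x)^* \eta, \eta \rangle$.

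Combining these estimates and taking the supremum over unit vectors $\xi, \eta \in \mathcal{H}$ on the left-hand side yields the claimed operator-norm bound. There is no real obstacle here; the only mild point to address is the interchange of inner product and integral in the first display and the positivity/measurability of the integrands in the scalar Cauchy--Schwarz step, both of which are justified by norm continuity of $K$, $L$, $f$ on the compact space $X$.
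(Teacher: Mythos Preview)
Your proof is correct and follows essentially the same route as the paper: test against unit vectors $\xi,\eta$, apply Cauchy--Schwarz, bound the resulting quadratic integrals by the norms of $\int L^*L\,d\mu$ and $\int K K^*\,d\mu$, and take the supremum. The only cosmetic difference is that the paper packages the two Cauchy--Schwarz steps into a single application on the sesquilinear form $(g,h)\mapsto\int_X\langle g(x),h(x)\rangle\,d\mu(x)$ on $C(X,\mathcal{H})$, whereas you split it into a pointwise Cauchy--Schwarz in $\mathcal{H}$ followed by Cauchy--Schwarz in $L^2(\mu)$; the content is identical.
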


\begin{proof}
  Let $\xi,\eta \in \mathcal{H}$ be unit vectors and assume without loss of generality that $\sup_{x \in X} \|f(x)\| = 1$.
  Applying the Cauchy--Schwarz inequality to the positive semi-definite sesquilinear form
  on $C(X,\mathcal{H})$ defined by
  \begin{equation*}
    (g,h) \mapsto \int_{X} \langle g(x), h(x) \rangle \, d \mu(x),
  \end{equation*}
  we find that
  \begin{align*}
    &\left| \int_{X} \langle K(x) f(x) L(x) \xi, \eta \rangle d \mu(x) \right|^2
    = \left| \int_X \langle f(x) L(x) \xi, K(x)^* \eta \rangle d \mu(x) \right|^2 \\
    \le &\int_{X} \langle f(x)^* f(x) L(x) \xi, L(x) \xi \rangle \, d \mu(x)
    \int_{X} \langle K(x)^* \eta, K(x)^* \eta \rangle \, d \mu(x).
  \end{align*}
  Since $\sup_{x \in X} \|f(x)\| = 1$, we have $f(x)^* f(x) \le I$ for all $x \in X$, so the first factor can be estimated by
  \begin{align*}
    \int_{X} \langle f(x)^* f(x) L(x) \xi, L(x) \xi \rangle \, d \mu(x) &\le
    \int_X \langle L(x) \xi, L(x) \xi \rangle d \mu(x) \\ &\le
    \left\| \int_X L(x)^* L(x) d \mu(x) \right\|.
  \end{align*}
  A similar estimate holds for the second factor, thus
  \begin{align*}
    &\left| \int_{X} \langle K(x) f(x) L(x) \xi, \eta \rangle d \mu(x) \right|^2 \\
    \le
    &\left\| \int_X L(x)^* L(x) d \mu(x) \right\|
    \left\| \int_X K(x) K(x)^* d \mu(x) \right\|.
  \end{align*}
  The assertion now follows by taking the supremum over all unit vectors $\xi, \eta \in \mathcal{H}$.
\end{proof}

We now turn to the simply commuting case.
Given natural numbers $0 \le m \le n$, let $K(m,n)$ be the smallest constant such that
\begin{equation*}
  \Big\| \sum_{k=m}^n A_k T^k \Big\| \le K(m,n) \sup_{z \in \mathbb{D}} \Big\| \sum_{k=m}^n A_k z^k \Big\|
\end{equation*}
holds for all operators $A_m,\ldots,A_n \in B(\mathcal{H})$ and all $T \in B(\mathcal{H})$ satisfying
$\|T\| \le 1$ and $T A_k = A_k T$ for all $k=m,m+1,\ldots,n$.
The arguments of Daher and Pisier show that $K(0,n) \simeq \log(n+1) + 1$; see \cite[Chapter 4]{Pisier01}.
Here, we write $f \simeq g$ to mean $f \lesssim g$ and $g \lesssim f$.
We will obtain fairly sharp estimates for $K(m,n)$ in general.

Let $H^2$ denote the classical Hardy space on the disc and let $\Han \subset H^2$ be the space of symbols of bounded Hankel operators $H^2 \to \overline{H^2}$. Thus, if $b \in \Han$,
then we obtain a bounded Hankel operator $H_b: H^2 \to \overline{H^2}$ satisfying
\begin{equation*}
  \langle H_b f \overline{g} \rangle_{\overline{H^2}} = \langle fg ,b \rangle_{H^2}
\end{equation*}
for all polynomials $f,g$. We equip $\Han$ with the norm $\|b\|_{\Han} = \|H_b\|$.
Nehari's theorem (\cite{Nehari57}, see also \cite[Theorem 1.1]{Peller03}) shows that $\Han \cong L^\infty / \overline{H^\infty_0}$ is the dual space of $H^1$ with respect to the Cauchy pairing.
It is known that $\Han$ can be identified
with $\textup{BMOA}$, but we do not require this. More background on Hankel operators can be found in \cite{Peller03}.

Given a function $h \in H^1$, we write $h(z) = \sum_{k=0}^\infty \widehat{h}(k) z^k$ for the Taylor series of $h$.

\begin{proposition}
  \label{prop:band_limited_abstract}
  We have
  \begin{align*}
    K(m,n) &= \inf \{ \|h\|_{H^1}: \widehat{h}(k) = 1 \text{ for } m \le k \le n \} \\
           &= \sup \{ |q(1)| : \|q\|_{\Han} \le 1 \text{ and } \supp \widehat{q} \subset [m,n] \}.
  \end{align*}
\end{proposition}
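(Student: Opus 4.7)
My plan is to handle the two equalities in order. For the second one, I would invoke Hahn--Banach duality. Let $V = \{h \in H^1 : \widehat h(k) = 1 \text{ for } m \le k \le n\}$ and $W = \{g \in H^1 : \widehat g(k) = 0 \text{ for } m \le k \le n\}$, so $V = h_0 + W$ with $h_0(z) = z^m + \cdots + z^n$. Hahn--Banach expresses $\inf_{h \in V}\|h\|_{H^1}$ as $\sup\{|\ell(h_0)| : \ell \in W^\perp,\ \|\ell\| \le 1\}$. The Nehari identification $(H^1)^* \cong \Han$ via the Cauchy pairing, recalled just before the proposition, then identifies $W^\perp$ with $\{q \in \Han : \supp \widehat q \subset [m,n]\}$, under which the pairing $\ell(h_0)$ becomes $q(1) = \sum_{k=m}^n \widehat q(k)$ (up to a harmless conjugation).

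For the upper bound $K(m,n) \le \inf_{h \in V}\|h\|_{H^1}$, I fix $h \in V$ and factor $h = fg$ in the Hardy space with $\|f\|_{H^2}\|g\|_{H^2} = \|h\|_{H^1}$, taking $f, g$ to be polynomials by approximation. For any admissible $T, A_m, \ldots, A_n$, the chain of equalities
\[
p(T) = \sum_k \widehat h(k)\, A_k T^k = \sum_{i,j} \widehat f(i) \widehat g(j)\, T^i A_{i+j} T^j = \int_{\mathbb T} f(\bar z T)\, p(z)\, g(\bar z T)\, |dz|
\]
is obtained by expanding $\widehat h(k) = \sum_{i+j=k}\widehat f(i)\widehat g(j)$, invoking the commutativity $TA_k = A_k T$ to get $A_{i+j}T^{i+j} = T^i A_{i+j} T^j$, and then substituting the Fourier reconstruction $A_{i+j} = \int_{\mathbb T} p(z)\bar z^{i+j}\,|dz|$ together with the fact that $p(z)$ commutes with $T$; here $f(\bar z T) = \sum_i \widehat f(i)\, \bar z^i T^i$ and similarly for $g$. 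Applying Lemma~\ref{lem:integral_cb} with $K(z) = f(\bar z T)$ and $L(z) = g(\bar z T)$, together with the orthogonality computation $\int_{\mathbb T} f(\bar z T) f(\bar z T)^* |dz| = \sum_i |\widehat f(i)|^2 T^i (T^*)^i \le \|f\|_{H^2}^2\, I$ (using $T^i (T^*)^i \le I$), and the analogous estimate for $g$, yields $\|p(T)\| \le \|f\|_{H^2}\|g\|_{H^2}\sup_z\|p(z)\| = \|h\|_{H^1}\sup_z\|p(z)\|$. Taking the infimum over $h$ completes this direction.

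The reverse inequality $K(m,n) \ge \sup|q(1)|$ is the delicate step. Observe first that for any admissible $(T, A_k)$ with $\sup_z\|p(z)\| \le 1$ and any unit vectors $\xi, \eta$, the functional $\lambda(h) := \langle \sum_k \widehat h(k) A_k T^k\, \xi,\, \eta\rangle$ (with $A_k = 0$ for $k \notin [m,n]$) is bounded on $H^1$ of norm at most one by the upper bound just proved, and its Fourier coefficients are supported in $[m,n]$; it thus corresponds via the Cauchy pairing to some $q \in \Han$ with $\|q\|_{\Han} \le 1$, $\supp\widehat q \subset [m,n]$, and $|\lambda(h_0)| = |q(1)| = |\langle p(T)\xi, \eta\rangle|$. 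Matching the supremum therefore amounts to showing that every element of the Hankel unit ball with support in $[m,n]$ is realized as such a $\lambda$. My plan for this construction is to apply Nehari to extend $q$ to $\phi \in L^\infty(\mathbb T)$ with $\|\phi\|_\infty \le 1$ and $\widehat\phi(k) = \widehat q(k)$ for $k \ge 0$, and then to build an admissible tuple on an auxiliary Hilbert space -- for instance on $H^2 \oplus \overline{H^2}$ using $H_q$ as an intertwining off-diagonal block between the shift on $H^2$ and the backward shift on $\overline{H^2}$, or on $L^2(\mathbb T)$ via multiplications built from $\phi$. The main obstacle is that naive scalar or tensor-product choices only produce the weaker bound $|q(1)|/\|q\|_\infty$; the Hankel structure of $q$ must therefore be encoded into the operator entries of the $A_k$ so that $\sup_z\|p(z)\|$ comes out to $\|q\|_{\Han}$ rather than to the much larger $\|q\|_\infty$.
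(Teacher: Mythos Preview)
Your duality argument and your upper bound are correct and match the paper's proof essentially verbatim (the paper handles convergence by replacing $T$ with $rT$ to get $\sigma(T)\subset\mathbb D$ rather than truncating $f,g$ to polynomials, but this is cosmetic). For the lower bound you have correctly identified both the Foguel--Hankel route and the obstacle it must overcome, but you stop at a plan; here is the explicit choice the paper makes, which completes it. On $H^2 \oplus \overline{H^2}$ set
\[
T = \begin{bmatrix} M_\zeta & 0 \\ 0 & M_{\overline\zeta}^* \end{bmatrix},
\qquad
A_k = \begin{bmatrix} 0 & 0 \\ H_{\widehat q(k)\,\zeta^k} & 0 \end{bmatrix}.
\]
The intertwining relation $H_b M_\zeta = M_{\overline\zeta}^* H_b$ gives $T A_k = A_k T$. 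For $z \in \mathbb T$ the lower-left entry of $p(z) = \sum_k A_k z^k$ is the Hankel operator with symbol $\sum_k \widehat q(k)(\overline z\,\zeta)^k = q(\overline z\,\zeta)$, so by \emph{rotation invariance} of the Hankel norm one gets $\sup_{z}\|p(z)\| = \|q\|_{\Han}$; this is precisely how the Hankel structure of $q$ is encoded into the $A_k$ so that the supremum comes out as $\|q\|_{\Han}$ rather than $\|q\|_\infty$. Finally, the lower-left entry of $p(T)$ is $\sum_k H_{\widehat q(k)\,\zeta^k} M_\zeta^k = H_{q(1)}$, a constant-symbol rank-one Hankel operator of norm $|q(1)|$. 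Your detour through the functional $\lambda$ is correct but becomes unnecessary once this construction is written down.
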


\begin{proof}
  The second equality follows from duality.
  Indeed, let 
  \begin{equation*}
    M = \{h \in H^1: \widehat{h}(k) = 0 \text{ for } m \le k \le n \}.
  \end{equation*}
  Then the annihilator of $M$ in $\Han$ is $\{q \in \Han: \supp \widehat{q} \subset [m,n]\}$.
  So if $f \in H^1$ is any function with $\widehat{f}(k) = 1$ for $m \le k \le n$, then
  by the Hahn--Banach theorem,
  \begin{align*}
    &\inf\{\|h\|_{H^1} : \widehat{h}(k) = 1 \text{ for } m \le k \le n\}
     = \dist(f,M) \\
    = &\sup \{ |\langle f,q \rangle|: \|q\|_{\Han} \le 1
    \text{ and } \supp \widehat{q} \subset [m,n] \} \\
      = & \sup \{ |q(1)|: \|q\|_{\Han} \le 1 \text{ and } \supp \widehat{q} \subset [m,n] \}.
  \end{align*}

  Next, we prove that $K(m,n)$ is bounded above by the infimum.
  To this end, we use a factorization argument, which already appears in \cite[III.F.18]{Wojtaszczyk91} and \cite[Proposition 4.16]{Pisier01};
  see also \cite{Haase11} for extensions to semigroups on Banach spaces.
  Let $p(z) = \sum_{k=m}^n A_k z^k$ be an operator-valued polynomial
  with $\sup_{z \in \mathbb{D}} \|p(z)\| \le 1$
  and let $T \in B(\mathcal{H})$ be a contraction that commutes with all $A_k$.
  Let $h \in H^1$ satisfy $\widehat{h}(k) = 1$ for $m \le k \le n$.
  We have to show that
  \begin{equation}
    \label{eqn:to_show}
    \|p(T)\| \le \|h\|_{H^1}.
  \end{equation}

  By replacing $T$ with $r T$ for $r < 1$, we may assume that $\sigma(T) \subset \mathbb{D}$.
  There exist $f,g \in H^2$ so that $h=fg$ and $\|f\|_{H^2} \|g\|_{H^2} = \|h\|_{H^1}$.
  Thus, by the commutation hypothesis,
  \begin{equation*}
    p(T) = \int_{\mathbb{T}} p(z) h(\overline{z} T) \, \frac{|dz|}{2 \pi}
    = \int_{\mathbb{T}} f(\overline{z} T) p(z) g(\overline{z} T) \, \frac{|dz|}{2 \pi}.
  \end{equation*}
  By Lemma \ref{lem:integral_cb}, it follows that
  \begin{equation*}
    \|p(T)\| \le \Big\| \int_{\mathbb{T}} f(\overline{z} T) f(\overline{z} T)^* \, \frac{|dz|}{2 \pi} \Big\|^{1/2}
    \Big\| \int_{\mathbb{T}} g(\overline{z} T)^* g(\overline{z} T) \, \frac{|dz|}{2 \pi} \Big\|^{1/2}.
  \end{equation*}
  Using orthogonality, we find that
  \begin{equation*}
    \int_{\mathbb{T}} f(\overline{z} T) f(\overline{z} T)^* \, \frac{|dz|}{2 \pi}
    = \sum_{k=0}^\infty |\widehat{f}(k)|^2 T^k (T^*)^k,
  \end{equation*}
  hence
  \begin{equation*}
    \Big\| \int_{\mathbb{T}} f(\overline{z} T) f(\overline{z} T)^* \, \frac{|dz|}{2 \pi} \Big\|^{1/2}
    \le \|f\|_{H^2}.
  \end{equation*}
  Similarly,
  \begin{equation*}
    \Big\| \int_{\mathbb{T}} g(\overline{z} T)^* g(\overline{z} T) \, \frac{|dz|}{2 \pi} \Big\|^{1/2} \le \|g\|_{H^2}.
  \end{equation*}
  Since $\|f\|_{H^2} \|g\|_{H^2} = \|h\|_{H^1}$, the upper bound \eqref{eqn:to_show} follows.
  
  Finally, we show that $K(m,n)$ is bounded below by the supremum.
  To this end, we use Foguel--Hankel operators; see \cite[Chapter 10]{Paulsen02} for background.
  For $q \in \Han$, let $H_q: H^2 \to \overline{H^2}$ be the Hankel
  operator satisfying $\langle H_q f,\overline{g} \rangle = \langle f g, q \rangle$
  for polynomials $f,g$.
  Let $\zeta$ denote the independent variable on $\mathbb{D}$, and let $M_\zeta: H^2 \to H^2$
  be the shift. Then 
  \begin{equation}
    \label{eqn:Hankel}
    H_q M_\zeta = M_{\overline{\zeta}}^* H_q.
  \end{equation}
  Let
  \begin{equation*}
    T =
    \begin{bmatrix}
      M_\zeta & 0 \\ 0 & M_{\overline{\zeta}}^*
    \end{bmatrix} \in B(H^2 \oplus \overline{H^2}).
  \end{equation*}
  If $q \in \Han$ with $\supp \widehat{q} \subset [m,n]$, define
  \begin{equation*}
    A_k =
    \begin{bmatrix}
      0 & 0 \\
      H_{\widehat{q}(k) \zeta^k} & 0
    \end{bmatrix}.
  \end{equation*}
  Then \eqref{eqn:Hankel} shows that $T$ commutes with each $A_k$.
  Let $p(z) = \sum_{k=m}^n A_k z^k$. Then
  \begin{equation*}
    \|p(z)\| = \Big\|\sum_{k=m}^n \widehat{q}(k) (\zeta \overline{z})^k \Big\|_{\Han}
    \le \|q\|_{\Han}
  \end{equation*}
  for all $z \in \mathbb{T}$ by rotation invariance of $\Han$, and hence for all $z \in \overline{\mathbb{D}}$
  by the maximum principle.
  So if $\|q\|_{\Han} \le 1$, then
  \begin{equation*}
    K(m,n) \ge \Big\| \sum_{k=m}^n A_k T^k \Big\|
    = \Big\|\sum_{k=m}^n H_{\widehat{q}(k) \zeta^k} M_{\zeta^k} \Big\| = \|H_{q(1)}\|
    = |q(1)|.
  \end{equation*}
  This proves the lower bound for $K(m,n)$.
\end{proof}

\begin{remark}
  \label{rem:power_bounded_and_cb}
  (a)
  The proof of the upper bound for $K(m,n)$ in fact works more generally for power bounded operators $T$.
  Thus, using the result of Proposition \ref{prop:band_limited_abstract}, we find that
  if $T$ is power bounded and $A_k$ are operators commuting with $T$, then
  \begin{equation*}
    \Big\| \sum_{k=m}^n A_k T^k \Big\| \le \sup_{n} \|T^n\|^2 K(m,n) \sup_{z \in \mathbb{D}} \Big\| \sum_{k=m}^n A_k z^k \Big\|.
  \end{equation*}
  So in contrast to the scalar von Neumann inequality, contractions do not yield a qualitatively better
  estimate than power bounded operators.

  (b) In the context of the classical inequalities of von Neumann and And\^o,
  it is natural to consider matrices of polynomials because of the connections to dilation theory,
  see for instance \cite[Chapter 7]{Paulsen02} and \cite[Chapter 4]{Pisier01}.
  In the present setting, one can similarly consider $r \times r$ matrices $[p_{ij}]$,
  where each entry $p_{ij}$ is an operator-valued polynomial of the form
  \begin{equation*}
    p_{i j}(z) = \sum_{k=m}^n A_k^{(ij)} z^k,
  \end{equation*}
  and each $A_k^{(ij)} \in B(\mathcal{H})$.
  Such a matrix can be evaluated entry-wise at an operator $T$ that commutes with all coefficients $A_{k}^{(ij)}$.
  Considering such matrices of arbitrary size $r$, one defines a completely bounded version
  $K_{\cb}(m,n)$ of the constant $K(m,n)$.
  However, this setting is actually not more general, and we have $K_{\cb}(m,n) = K(m,n)$.
  Indeed, given an $r \times r$ matrix $[p_{ij}]$ as above and a contraction $T \in B(\mathcal{H})$ commuting with all coefficients
  $A_k^{(ij)}$, let $E_{ij} \in M_r(\mathbb{C})$ be the usual matrix units and define
  \begin{equation*}
    q(z) = \sum_{i,j=1}^r \sum_{k=m}^n (A^{(ij)}_k \otimes E_{ij}) z^k.
  \end{equation*}
  Then $q$ is a polynomial with coefficients in $B(\mathcal{H}) \otimes M_r(\mathbb{C})$,
  which we identify with $B(\mathcal{H}^r)$,
  and $\sup_{z \in \mathbb{D}} \|q(z)\| = \sup_{z \in \mathbb{D}} \|[p_{ij}(z)]\|$.
  The contraction $T \otimes I$ commutes with the coefficients of $q$,
  and
  \begin{equation*}
    \| [p_{ij}(T)]\| = \|q(T \otimes I)\|.
  \end{equation*}
  This shows that $K_{\cb}(m,n) = K(m,n)$.
\end{remark}

With the help of the last result, we can now get quantitative estimates for $K(m,n)$.
One might build a function $h$ in Proposition \ref{prop:band_limited_abstract} with the help of de la Vall\'ee-Poussin kernels
and use known $L^1$-estimates of de la Vall\'ee-Poussin kernels; see
\cite{Mehta15,Steckin78,SSM19}.
Unfortunately, these estimates do not appear to be completely sufficient for our needs.

\begin{remark}
  \label{rem:basic_bounds}
  Here are some simple ways to get quantitative estimates on $K(m,n)$.
  These are already sufficient for our main applications.
  \begin{enumerate}[wide]
    \item For $m=0$, we may use as in \cite[Corollary 4.17]{Pisier01} the function $h(z) = \sum_{k=0}^n z^k = \frac{1 - z^{n+1}}{1 - z}$.
      This function is a shifted version of the Dirichlet kernel, so $\|h\|_{H^1}$ is comparable to $\log(n+1) + 1$.
      Thus,
      \begin{equation*}
        K(0,n) \lesssim \log(n+1) + 1.
      \end{equation*}
      
    \item For general $n \ge m \ge 0$, we can use the shifted Fej\'er-type kernels $W_j$
      whose Fourier coefficients are the triangular-shaped function supported in $(2^{j-1},2^{j+1})$
      with peak at $2^j$; see Appendix \ref{sec:Besov} for more information.
      Let $h = \sum_{j=a}^b W_j$. Then $\widehat{h}(k) = 1$ for $2^a \le k \le 2^b$ (and $\widehat{h}(k) = 1$
      for  $0 \le k \le 2^b$ in case $a=0$) and $\|h\|_{H^1} \le \frac{3}{2} (b-a +1)$.
      (A slightly different function $h$ with these properties was already contructed by Haase;
      see \cite[Lemma A.2]{Haase11}.)
      By Proposition \ref{prop:band_limited_abstract}, this yields
      \begin{equation*}
        K(m,n) \lesssim 1 + \log \Big( \frac{n+1}{m+1} \Big).
      \end{equation*}
    \item Let
  \begin{equation*}
    f(z) = \frac{1}{m+1} \sum_{k=0}^m z^k, \quad g(z) = \sum_{k=0}^n z^k
  \end{equation*}
  and $h = f g$. Then $\widehat{h}(k) = 1$ for $m \le k \le n$, and
  \begin{equation*}
    \|h\|_{H^1}^2 \le \|f\|_{H^2}^2 \|g\|_{H^2}^2
    = \frac{n+1}{m+1}.
  \end{equation*}
  This yields 
  \begin{equation*}
    K(m,n) \le \left( \frac{n+1}{m+1} \right)^{1/2}.
  \end{equation*}
  \end{enumerate}
\end{remark}

Observe that estimate (2) is better than estimate (3) when the ratio $\frac{n+1}{m+1}$ is large, while estimate (3) is better
when $\frac{n+1}{m+1}$ is close to $1$ because of the implied constants in estimate (2).
Even though the estimates in Remark \ref{rem:basic_bounds} are sufficient for our applications, it seems worthwhile to determine
the behavior of $K(m,n)$ more precisely and in particular establish an estimate for $K(m,n)$ that is good in both regimes.
This is done in the following result.

\begin{theorem}
  \label{thm:OVNI_sharp}
  We have
  \begin{equation*}
    \max\Big( 1,\frac{1}{\pi} \log \Big( \frac{n+2}{m+1} \Big) \Big) \le K(m,n) \le \frac{1}{\pi} \log \Big( \frac{n+1}{m+1} \Big)
    + \min \Big( \frac{n+1}{m+1},2 \Big).
  \end{equation*}
\end{theorem}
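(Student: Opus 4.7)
The plan is to deduce both bounds from the duality of Proposition~\ref{prop:band_limited_abstract}, which provides two characterizations of $K(m,n)$: as an infimum of $H^1$-norms of functions $h$ with prescribed Fourier coefficients on $[m,n]$, and as a supremum of $|q(1)|$ over Hankel symbols $q$ supported on $[m,n]$.

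For the lower bound, the bound $K(m,n)\ge 1$ is immediate from the infimum formulation since any admissible $h$ satisfies $\|h\|_{L^1}\ge |\widehat h(m)|=1$. For the logarithmic term, I would plug the candidate
\[
q(z)=\frac{1}{\pi}\sum_{k=m}^n\frac{z^k}{k+1}
\]
into the supremum formula: this is the Fourier truncation of $-\log(1-z)/(\pi z)$, whose full Hankel operator is the classical Hilbert matrix divided by $\pi$. A direct comparison of the sum with an integral gives $q(1)\ge \frac{1}{\pi}\log\frac{n+2}{m+1}$. The delicate step is to verify $\|q\|_{\Han}\le 1$; the natural route is via the integral representation $(k+1)^{-1}=\int_0^1 t^k\,dt$, which rewrites the Hankel quadratic form as an integral of a squared generating function against the classical Hilbert-matrix bound of $\pi$, with the Fourier-truncation contribution to be absorbed.

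For the upper bound, I would split on $\rho:=(n+1)/(m+1)$. When $\rho\le 2$, Remark~\ref{rem:basic_bounds}(3) yields $K(m,n)\le\sqrt\rho$, and an elementary calculus check shows $\sqrt t\le t+\frac{1}{\pi}\log t$ on $[1,2]$ (the inequality is tight at $t=1$ and the derivatives compare favorably on $(1,2]$). When $\rho>2$, the target is $\|h\|_{H^1}\le\frac{1}{\pi}\log\rho+2$; here I would refine the dyadic Fej\'er-type decomposition of Remark~\ref{rem:basic_bounds}(2) to sharpen the constant to $1/\pi$, leveraging the Lebesgue-constant asymptotics $\|1+z+\cdots+z^N\|_{L^1}\sim(2/\pi^2)\log N$ of the analytic Dirichlet kernel together with a suitable factorization $h=fg$ that balances the endpoints $m$ and $n$.

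The main technical hurdles I expect are twofold. First, the bound $\|q\|_{\Han}\le 1$ for the truncated Hilbert symbol is not a formal consequence of the classical Hilbert matrix bound, since Fourier truncation is not an $\Han$-norm contraction in general; the argument must genuinely exploit the positivity and integral structure of the Hilbert matrix. Second, in the upper bound when $\rho>2$, threading the constant $1/\pi$ through a kernel-type construction is subtle: the naive shifted Dirichlet polynomial $z^m+\cdots+z^n$ produces the wrong argument $\log(n-m)$ instead of $\log\rho$, while the straightforward dyadic Fej\'er decomposition only gives a worse constant. Balancing these so as to recover exactly $(1/\pi)\log\rho+O(1)$ is the crux of the upper bound.
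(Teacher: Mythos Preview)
Your lower bound essentially matches the paper's, but you are overthinking the Hankel-norm estimate. The paper simply observes that the Hankel matrix of $\pi q$ has non-negative entries that are entrywise dominated by the Hilbert matrix $(1/(i+j+1))_{i,j\ge 0}$; for matrices with non-negative entries, entrywise domination implies norm domination (replace $x,y$ by $|x|,|y|$ in $\langle Hx,y\rangle$), so $\|\pi q\|_{\Han}\le\pi$ directly. No integral representation or worry about truncation is needed.

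The upper bound is where your proposal has a genuine gap. Your case $\rho\le 2$ is fine (indeed $\sqrt{t}\le t\le t+\tfrac{1}{\pi}\log t$ for $t\ge 1$), but for $\rho>2$ you have only a vague plan: neither ``refining the dyadic Fej\'er decomposition'' nor ``Lebesgue-constant asymptotics of the analytic Dirichlet kernel'' (which go like $\tfrac{2}{\pi^2}\log N$, not $\tfrac{1}{\pi}\log N$) produces a concrete $h$ with $\|h\|_{H^1}\le\tfrac{1}{\pi}\log\rho+2$. The paper's construction is of a quite different nature. Set
\[
f(z)=\frac{1-z^{m+1}}{(m+1)(1-z)^2},\qquad u(z)=\sqrt{f(z)}=\frac{(1-z^{m+1})^{1/2}}{(m+1)^{1/2}(1-z)},
\]
so that $\widehat f(k)=1$ for all $k\ge m$. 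Let $g$ be the degree-$n$ Taylor polynomial of $u$ and put $h=g^2$; then $h$ agrees with $f$ to order $n$, so $\widehat h(k)=1$ for $m\le k\le n$, and $\|h\|_{H^1}=\|g\|_{H^2}^2=\sum_{j=0}^n|\widehat u(j)|^2$. Expanding $u$ via the binomial series yields the closed form $\widehat u(j)=(m+1)^{-1/2}4^{-l}\binom{2l}{l}$ with $l=\lfloor j/(m+1)\rfloor$; Stirling then gives $|\widehat u(j)|^2\le\frac{1}{\pi(m+1)l}$ for $l\ge 1$, and summing this harmonic series over the $(m+1)$-fold repeated indices produces exactly the constant $\tfrac{1}{\pi}$ in front of $\log\rho$. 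This squaring trick, which converts an $H^1$-norm into an explicit $\ell^2$-sum of computable coefficients, is the missing idea in your proposal.
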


\begin{proof}
  Upper bound: We use Proposition \ref{prop:band_limited_abstract} and construct a function $h \in H^1$
  with $\widehat{h}(k) = 1$ for $m \le k \le n$ whose norm is dominated by the right-hand side in the statement
  of the theorem.
  The construction below already appears in work of Haase, see the proof of \cite[Lemma A.2]{Haase11}.
  But in order to obtain the stated bound, we need to estimate somewhat more carefully.
  
  Define holomorphic functions on the disc by
  \begin{align*}
    f(z) = \sum_{j=0}^{m-1} \frac{j+1}{m+1} z^j + \sum_{j=m}^\infty z^j
    &= \frac{d}{dz} \Big( \frac{1 - z^{m+1}}{(m+1) (1 - z)} \Big) + \frac{z^m}{1 - z} \\
    &= \frac{1 - z^{m+1}}{(m+1) (1 - z)^2}
  \end{align*}
  and
  \begin{equation*}
    u(z) = \frac{(1 - z^{m+1})^{1/2}}{(m+1)^{1/2} (1 - z)}
  \end{equation*}
  and
  \begin{equation*}
    g(z) = \sum_{j=0}^n \widehat{u}(j) z^j.
  \end{equation*}
  Finally, we set $h = g^2$.
  Since $g$ agrees with $u$ to order $n$ and $u^2 =f$, we see that $h$ agrees with $f$ to order $n$.
  In particular, $\widehat{h}(k) = 1$ for $m \le k \le n$.
  
  It remains to estimate $\|h\|_{H^1}$.
  To this end, notice that
  \begin{equation}
    \label{eqn:h_norm}
    \|h\|_{H^1} = \|g\|_{H^2}^2 = \sum_{j=0}^n |\widehat{u}(j)|^2.
  \end{equation}
  To compute the Taylor coefficients of $u$, we use the binomial series to obtain
  \begin{equation*}
    (m+1)^{\frac{1}{2}} u(z) = \Big( \sum_{k=0}^\infty (-1)^k \binom{\frac{1}{2}}{k} z^{k (m+1)} \Big)
    \Big( \sum_{k=0}^\infty z^k \Big).
  \end{equation*}
  Expanding the product and writing $l = \lfloor{j/(m+1)} \rfloor$,
  we see that for all $j \ge 0$, the Taylor coefficients are given by
  \begin{equation}
    \label{eqn:u_taylor}
    (m+1)^{1/2} \widehat{u}(j) =  \sum_{\nu = 0}^l (-1)^\nu \binom{\frac{1}{2}}{\nu}
    = (-1)^l \binom{-\frac{1}{2}}{l}.
  \end{equation}
  Here, the last identity can be seen by expanding both sides of $(1 - z)^{-1/2} = \frac{(1- z)^{1/2}}{1 -z}$
  into a binomial series and comparing coefficients.
  Expanding the binomial coefficient $\binom{-1/2}{l}$ and rearranging \eqref{eqn:u_taylor} yields
  \begin{equation}
    \label{eqn:Taylor_u}
    \widehat{u}(j) = (m+1)^{-1/2} 4^{-l} \binom{2l}{l}, \quad
    \text{ where } 
  l = \lfloor{j/(m+1)} \rfloor.
  \end{equation}
  In words, the Taylor coefficients of $u$ are given by the sequence on the right,
  and each element is repeated $m+1$ times.

  Now, write $(n+1) = k (m+1) + r$ with natural numbers $k,r$ with $r < m+1$ (hence
  $k = \lfloor{(n+1)/(m+1)} \rfloor)$.
  Thus, from \eqref{eqn:h_norm} and \eqref{eqn:Taylor_u}, we find that
  \begin{equation*}
    \|h\|_{H^1} = \sum_{l=0}^{k-1} \Big(4^{-l} \binom{2l}{l} \Big)^2 + \frac{r}{m+1} \Big( 4^{-k} \binom{2 k}{k} \Big)^2.
  \end{equation*}
  To estimate the central binomial coefficients, we use Stirling's formula in the form
  \begin{equation*}
    \sqrt{2 \pi} n^{n+\frac{1}{2}} e^{-n} e^{\frac{1}{12 n + 1}} \le n!
    \le
    \sqrt{2 \pi} n^{n+\frac{1}{2}} e^{-n} e^{\frac{1}{12 n}},
  \end{equation*}
  see for example \cite{Robbins55}, to obtain the estimate
  \begin{equation*}
    4^{-l} \binom{2l}{l} \le \frac{1}{\sqrt{\pi l}}
  \end{equation*}
  for $l \ge 1$. Thus,
  \begin{equation}
    \label{eqn:h_norm_2}
    \|h\|_{H^1} \le 1 + \sum_{l=1}^{k-1} \frac{1}{\pi l} + \frac{r}{\pi (m+1) k}.
  \end{equation}
  Comparing the sum with an integral, we find that
  \begin{equation*}
    \sum_{l=1}^{k-1} \frac{1}{l} \le \log(k) + \min(k-1,1)
  \end{equation*}
  for $k \ge 1$. Moreover, recalling that $(n+1) = k (m+1) + r$, we see that
  \begin{equation*}
    \frac{r}{(m+1) k} = \frac{\frac{n+1}{m+1} - \lfloor{ \frac{n+1}{m+1}}\rfloor}{ \lfloor{ \frac{n+1}{m+1}} \rfloor}
    \le \min \Big( \frac{n+1}{m+1}-1,1 \Big).
  \end{equation*}
  It therefore follows from \eqref{eqn:h_norm_2} that
  \begin{align*}
    \|h\|_{H^1} \le 1+ \frac{1}{\pi} \log \Big( \frac{n+1}{m+1} \Big) + \frac{2}{\pi} \min \Big( \frac{n+1}{m+1} - 1,1 \Big).
  \end{align*}
  The stated upper bound follows from this inequality.

  Lower bound: The lower bound $K(m,n) \ge 1$ is trivial. Let
  \begin{equation*}
    q(z) = \sum_{k=m}^n \frac{1}{k+1} z^k.
  \end{equation*}
  Then
  \begin{equation*}
    q(1) = \sum_{k=m}^n \frac{1}{k+1} \ge \int_m^{n+1} \frac{1}{t+1} \, dt = \log\big( \frac{n+2}{m+1} \big).
  \end{equation*}
  To estimate $\|q\|_{\Han}$, observe that with respect to the standard bases of $H^2$ and $\overline{H^2}$,
  the Hankel operator $H_q$ is the Hankel matrix corresponding to the sequence $(\overline{\widehat{q}(k)})_{k=0}^\infty$.
  Since $q$ has non-negative Taylor coefficients, it follows that $\|q\|_{\Han}$ is bounded above by the norm of the Hilbert matrix,
  which is equal to $\pi$. Thus, the lower bound follows from Proposition \ref{prop:band_limited_abstract}.
\end{proof}

As a consequence, we obtain a Besov-type functional calculus.
Background on Besov spaces can be found in Appendix \ref{sec:Besov}.
If $f: \mathbb{D} \to B(\mathcal{H})$ is an operator-valued holomorphic function, define
\begin{equation*}
  \|f\|_{B^0_{\infty,1}} = \|f(0)\| + \int_0^1 \|f'_r\|_\infty \, dr.
\end{equation*}
Here, $f'_r(z) = f'(r z)$ and $\|g\|_\infty = \sup_{z \in \mathbb{D}} \|g(z)\|$.

\begin{corollary}
  Let $f: \mathbb{D} \to B(\mathcal{H})$ be an operator-valued analytic function with Taylor series
  \begin{equation*}
    f(z) = \sum_{k=0}^\infty A_k z^k.
  \end{equation*}
  Let $T \in B(\mathcal{H})$ be a contraction that commutes with each $A_k$.
  If $\|f\|_{B^0_\infty,1} < \infty$, then
  \begin{equation*}
    f(T) = \lim_{r \nearrow 1} f( r T) = \lim_{r \nearrow 1} \sum_{n=0}^\infty A_k r^k T^k
  \end{equation*}
  exists, and
  \begin{equation*}
    \|f(T)\| \lesssim \|f\|_{B^0_{\infty,1}}.
  \end{equation*}
\end{corollary}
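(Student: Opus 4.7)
The plan is to reduce to the operator-coefficient von Neumann inequality on dyadic blocks, using the fact that $K(m,n)$ is bounded by an absolute constant whenever the ratio $(n+1)/(m+1)$ is bounded. Concretely, I will use the shifted Fej\'er-type kernels $W_j$ from the appendix: $W_0 \ast f$ is supported in frequencies $[0,2]$, and for $j \ge 1$, $W_j \ast f$ is a polynomial with Fourier support in $[2^{j-1}, 2^{j+1}]$. A standard identity of the form $f = f(0) + \sum_{j \ge 0} W_j \ast f$ (where the sum converges in, say, $H^2$) will be the starting decomposition.

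First, for each fixed $r < 1$, the series $f(rT) = \sum_{k \ge 0} A_k r^k T^k$ converges in operator norm, since $\sup_z\|f(z)\|$ is finite on compact subsets of $\mathbb{D}$ and $\|rT\| < 1$. Rewriting it blockwise,
\begin{equation*}
  f(rT) = f(0) + \sum_{j \ge 0} (W_j \ast f)(rT).
\end{equation*}
Since $W_j \ast f$ is a polynomial with coefficients commuting with $T$ whose Fourier support lies in a dyadic block, Proposition \ref{prop:band_limited_abstract} together with Theorem \ref{thm:OVNI_sharp} (or the estimate in Remark \ref{rem:basic_bounds}(2)) gives
\begin{equation*}
  \|(W_j \ast f)(rT)\| \le K(2^{j-1},2^{j+1}) \, \|(W_j \ast f)_r\|_\infty \lesssim \|W_j \ast f\|_\infty,
\end{equation*}
uniformly in $r \in (0,1)$, because the ratio of the frequency bounds is a fixed constant. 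The analogous estimate holds for $W_0 \ast f$.

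Next, I will invoke the appendix to identify $\|f(0)\| + \sum_{j \ge 0} \|W_j \ast f\|_\infty$ with the Besov norm $\|f\|_{B^0_{\infty,1}}$ up to absolute constants; this is the standard Littlewood--Paley description of $B^0_{\infty,1}$ and carries over verbatim to operator-valued functions. Combining with the previous display yields
\begin{equation*}
  \|f(rT)\| \lesssim \|f(0)\| + \sum_{j \ge 0} \|W_j \ast f\|_\infty \lesssim \|f\|_{B^0_{\infty,1}},
\end{equation*}
uniformly in $r < 1$. For the existence of the limit, I will run a Cauchy argument: for $r, s \in (0,1)$,
\begin{equation*}
  f(rT) - f(sT) = \sum_{j \ge 0} \bigl( (W_j \ast f)(rT) - (W_j \ast f)(sT) \bigr),
\end{equation*}
and each term tends to $0$ in norm as $r, s \nearrow 1$ (it is a fixed polynomial in $T$ evaluated at $rT$ and $sT$, which converge in norm to $T$), while the tail is uniformly dominated by $2 \sum_{j \ge N} \|W_j \ast f\|_\infty$, which is the tail of a convergent series. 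Hence $f(T) := \lim_{r \nearrow 1} f(rT)$ exists in norm and inherits the bound above.

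The only nontrivial ingredient is the dyadic-block estimate, which is already in hand from Theorem \ref{thm:OVNI_sharp}; the remaining work is the routine identification of the integral norm $\|f(0)\| + \int_0^1 \|f'_r\|_\infty\,dr$ with the dyadic sum, which I will cite from the appendix. No commutativity difficulty arises at the block level since each coefficient of $W_j \ast f$ is a complex linear combination of the $A_k$'s and so commutes with $T$.
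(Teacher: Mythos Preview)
Your proposal is correct and follows essentially the same route as the paper: decompose via the dyadic kernels $W_j$, bound each block using $K(2^{j-1},2^{j+1})\lesssim 1$, and then invoke Proposition~\ref{prop:Besov_equiv} to identify the dyadic sum with the Besov norm. The only cosmetic differences are that the paper first treats the case where $f$ extends past $\overline{\mathbb{D}}$ and then passes to the general case via $\|f-f_r\|_{B^0_{\infty,1}}\to 0$, whereas you work with $f(rT)$ throughout and run the Cauchy argument blockwise; also note that your displayed decomposition should read $f(rT)=\sum_{j\ge 0}(W_j\ast f)(rT)$ without the extra $f(0)$, since $W_0\ast f$ already picks up the constant term.
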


\begin{proof}
  Suppose initially that $f$ is holomorphic in a neighborhood of $\overline{\mathbb{D}}$, so that the
  sum $\sum_{k=0}^\infty A_k T^k$ converges. We use the Fej\'er-type kernels $W_n$,
  see the appendix. Let $p_n = f * W_n$.
  Since $\sum_{n=0}^\infty \widehat{W}_n(k) = 1$ for all $k$ and since $f$ is holomorphic
  in a neighborhood of $\overline{\mathbb{D}}$, we may interchange the order of summation to find that
  \begin{equation*}
    \sum_{n=0}^\infty p_n(T) = \sum_{n=0}^\infty \sum_{k=0}^\infty A_k \widehat{W}_n(k) T^k
    = \sum_{k=0}^\infty A_k T^k = f(T).
  \end{equation*}
  For $n \ge 1$, the polynomial $p_n$ is $(2^{n-1},2^{n+1})$-band-limited,
  so by Theorem \ref{thm:OVNI_sharp} (or simply Remark \ref{rem:basic_bounds} (2)),
  \begin{equation*}
    \|p_n(T)\| \le K(2^{n-1},2^{n+1}) \|p_n\|_\infty \lesssim \|p_n\|_\infty.
  \end{equation*}
  Such an estimate also holds for $n=0$.
  It follows that
  \begin{equation*}
    \|f(T)\| \le \sum_{n=0}^\infty \|p_n(T)\|
    \lesssim \sum_{n=0}^\infty \|f * W_n\|_\infty.
  \end{equation*}
  By Proposition \ref{prop:Besov_equiv}, the last expression is comparable to $\|f\|_{B^0_{\infty,1}}$.

  If $f$ merely satisfies $\|f\|_{B^0_{\infty,1}} < \infty$, then by definition and the dominated convergence theorem, $\|f - f_r\|_{B^0_{\infty,1}} \to 0$
  as $r \to 1$. From this and the first paragraph, it easily follows that the net $(f_r(T))$ is Cauchy in $B(\mathcal{H})$,
  hence $\lim_{r \to 1} f_r(T)$ exists in $B(\mathcal{H})$ and satisfies the desired norm estimate.
\end{proof}

\section{von Neumann's inequality on the polydisc}
\label{sec:polydisc}

In this section, we use the results on von Neumann's inequality with operator coefficients to study
von Neumann's inequality for commuting contractions. The basic idea is very simple:
we plug in operators successively and use the inequality with operator coefficients in each step.
This approach already appeared in the work of Daher; see \cite[Chapter 4]{Pisier01}.

As a first application, we establish an upper bound for the Schur--Agler norms of polynomials
that is polylogarithmic in the degree of the polynomial.
Supremum norms of $d$-variable functions are understood to be taken over the polydisc $\mathbb{D}^d$.

\begin{proposition}
  \label{prop:SA_log_bound}
  If $d \ge 2$ and if $p \in \mathbb{C}[z_1,\ldots,z_d]$ is a polynomial of degree $n \ge 1$, then
  \begin{equation*}
    \|p\|_{\SA} \lesssim_d (\log(n+1))^{d-2} \|p\|_\infty.
  \end{equation*}
\end{proposition}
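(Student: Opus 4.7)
The proof is by induction on $d$, peeling off one variable at a time using Proposition \ref{prop:band_limited_abstract} (via the concrete bound $K(0,n) \lesssim \log(n+1)+1$ from Remark \ref{rem:basic_bounds}(1)). The base case $d=2$ is exactly And\^o's inequality, which gives $\|p\|_{\SA} \le \|p\|_\infty$, matching the claim since $\log(n+1)^{0}=1$.

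For the inductive step, let $T = (T_1,\ldots,T_d)$ be a $d$-tuple of commuting contractions on some Hilbert space $\mathcal{H}$ and write
\begin{equation*}
  p(T_1,\ldots,T_d) = \sum_{k=0}^n A_k\, T_d^k, \qquad A_k = \sum_{|\alpha|\le n-k} \widehat{p}(\alpha,k)\, T_1^{\alpha_1}\cdots T_{d-1}^{\alpha_{d-1}} \in B(\mathcal{H}).
\end{equation*}
Since $T_d$ commutes with each $T_j$ for $j<d$, it commutes with every $A_k$. Thus Proposition \ref{prop:band_limited_abstract} (together with Remark \ref{rem:basic_bounds}(1)) applies to the operator-valued polynomial $q(z) = \sum_{k=0}^n A_k z^k$ and $T_d$, yielding
\begin{equation*}
  \|p(T_1,\ldots,T_d)\| = \|q(T_d)\| \le K(0,n)\, \sup_{z_d \in \mathbb{D}} \|q(z_d)\| \lesssim \log(n+1)\, \sup_{z_d \in \mathbb{D}} \|q(z_d)\|.
\end{equation*}

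For each fixed $z_d \in \mathbb{D}$, the operator $q(z_d)$ is precisely $\widetilde{p}_{z_d}(T_1,\ldots,T_{d-1})$, where $\widetilde{p}_{z_d}(w_1,\ldots,w_{d-1}) = p(w_1,\ldots,w_{d-1},z_d)$ is a polynomial in $d-1$ variables of degree at most $n$. By the inductive hypothesis applied with $d-1$ variables,
\begin{equation*}
  \|q(z_d)\| = \|\widetilde{p}_{z_d}(T_1,\ldots,T_{d-1})\|
  \lesssim_{d-1} \log(n+1)^{(d-1)-2}\, \|\widetilde{p}_{z_d}\|_\infty
  \le \log(n+1)^{d-3}\, \|p\|_\infty,
\end{equation*}
where the last inequality uses that $\|\widetilde{p}_{z_d}\|_\infty \le \|p\|_\infty$ for every $z_d \in \mathbb{D}$. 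Combining the two estimates and taking the supremum over $z_d \in \mathbb{D}$ gives the desired bound $\|p(T)\| \lesssim_d \log(n+1)^{d-2}\|p\|_\infty$.

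The argument is essentially a clean induction, and the only subtle points are bookkeeping: one must check (i) that the commutation hypothesis in Proposition \ref{prop:band_limited_abstract} really holds at each stage, which is immediate from commutativity of the original tuple; (ii) that the coefficient polynomials $A_k$ depend polynomially on $T_1,\ldots,T_{d-1}$ of degree at most $n$, so the inductive hypothesis applies with the same $n$; and (iii) that starting from the And\^o base case rather than from von Neumann yields the exponent $d-2$ rather than $d-1$. There is no genuine obstacle beyond these; the logarithmic loss per variable is the sole source of the $\log(n+1)^{d-2}$ factor.
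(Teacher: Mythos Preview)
Your proof is correct and follows essentially the same approach as the paper: induction on $d$ with And\^o's inequality as the base case, peeling off one variable at each step via the operator-coefficient von Neumann inequality $K(0,n) \lesssim \log(n+1)$ from Remark \ref{rem:basic_bounds}(1). The only difference is presentational order---the paper first invokes the inductive hypothesis and then applies the $K(0,n)$ bound, while you do the reverse---but the logic is identical.
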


\begin{proof}
  The proof is by induction on $d$. If $d=2$, then And\^o's theorem shows that $\|p\|_{\SA} = \|p\|_\infty$.
  Let $d \ge 3$, and suppose that the result has been shown for $d-1$.
  We write
  \begin{equation*}
    p(z) = \sum_{k=0}^n p_k(z_1,\ldots,z_{d-1}) z_d^k,
  \end{equation*}
  for polynomials $p_k \in \mathbb{C}[z_1,\ldots,z_{d-1}]$ of degree at most $n$.
  Let $T$ be a $d$-tuple of commuting contractions.
  By the inductive hypothesis,
  we have
  \begin{equation*}
    \|p(T_1,\ldots,T_{d-1},z)\| \lesssim_d (\log(n+1))^{d-3} \|p\|_\infty
  \end{equation*}
  for all $z \in \mathbb{D}$, so by Theorem \ref{thm:OVNI_sharp} (or simply Remark \ref{rem:basic_bounds} (1)),
  we have
  \begin{equation*}
    \|p(T)\| \le K(0,n) \sup_{z \in \mathbb{D}} \|p(T_1,\ldots,T_{d-1},z)\|
    \lesssim_d (\log(n+1))^{d-2} \|p\|_\infty. \qedhere
  \end{equation*}
\end{proof}

A similar technique as in the last proof can be used to establish Theorem \ref{thm:main3},
which we restate here.

\begin{theorem}
  \label{thm:SA_sup_asympt}
  Let $d \ge 3$ and let $p \in \mathbb{C}[z_1,\ldots,z_d]$. Then
  \begin{equation*}
    \lim_{m \to \infty} \|(z_3 \cdot \ldots \cdot z_d)^m p\|_{\SA} = \|p\|_\infty.
  \end{equation*}
\end{theorem}

\begin{proof}
  It is clear that
  \begin{equation*}
    \|(z_3 \cdot \ldots \cdot z_d)^m p\|_{\SA} \ge \|(z_3 \cdot \ldots \cdot z_d)^m p\|_\infty = \|p\|_\infty
  \end{equation*}
  for all $m \in \mathbb{N}$.

  To establish the converse direction, we prove that for all $d \ge 2$
  and all polynomials $p \in \mathbb{C}[z_1,\ldots,z_d]$ of degree at most $n$, the estimate
  \begin{equation}
    \label{eqn:SA_sup_asympt}
    \|(z_3 \cdot \ldots \cdot z_d)^m p \|_{\SA} \le K(m,m+n)^{d-2} \|p\|_\infty
  \end{equation}
  holds. Here, the (empty) product on the left is understood as $1$ if $d=2$.
  Assuming this estimate for a moment,
  we obtain the remaining inequality from
  Theorem \ref{thm:OVNI_sharp} (or simply Part (3) of Remark \ref{rem:basic_bounds}), which shows that
  \begin{equation*}
    \limsup_{m \to \infty} K(m,m+n) \le 1.
  \end{equation*}

  The proof of \eqref{eqn:SA_sup_asympt} is similar to that of Proposition \ref{prop:SA_log_bound} and proceeds by induction on $d$.
  The case $d=2$ follows from And\^o's theorem.
  Let $d \ge 3$ and suppose that \eqref{eqn:SA_sup_asympt} has been shown for $d-1$. Let $p$ be a polynomial in $d$ variables
  of degree at most $n$, which we write as
  \begin{equation*}
    p(z) = \sum_{k=0}^n p_k(z_1,\ldots,z_{d-1}) z_d^k,
  \end{equation*}
  where each $p_k \in \mathbb{C}[z_1,\ldots,z_{d-1}]$ has degree at most $n$.
  Let $T$ be a $d$-tuple of commuting contractions.
  Then,
  \begin{align*}
    &\| ( (z_3 \cdot \ldots \cdot z_d)^m p)(T)\| \\ = \,
    &\Big\| (T_3 \cdot \ldots \cdot T_{d-1})^m \sum_{k=0}^n  p_k(T_1,\ldots,T_{d-1}) T_d^{k+m} \Big\| \\
                                                \le
    \, &K(m,m+n) \sup_{z_d \in \mathbb{D}}
    \Big\| (T_3 \cdot \ldots \cdot T_{d-1})^m \sum_{k=0}^n p_k(T_1,\ldots,T_{d-1}) z_d^{k+m} \Big\|.
  \end{align*}
  For each $z_d \in \mathbb{D}$, the inductive hypothesis implies that
  \begin{align*}
    &\Big\| (T_3 \cdot \ldots \cdot T_{d-1})^m \sum_{k=0}^n  p_k(T_1,\ldots,T_{d-1}) z_d^{k+m} \Big\| \\
    \le \, &K(m,m+n)^{d-3} \sup_{z' \in \mathbb{D}^{d-1}} | z_d^m p(z',z_d)| \\
    \le \,  &K(m,m+n)^{d-3} \|p\|_\infty.
  \end{align*}
  Combining both inequalities yields \eqref{eqn:SA_sup_asympt} for $d$.
\end{proof}

If $p \in \mathbb{C}[z_1,\ldots,z_d]$ is a polynomial, say
\begin{equation*}
  p(z) = \sum_{\alpha} \widehat{p}(\alpha) z^\alpha,
\end{equation*}
then we say that $p$ is $(m,n)$-band-limited if $\widehat{p}(\alpha) = 0$ whenever $|\alpha| < m$
or $|\alpha| > n$.
We say that $p$ is $(m,n)$-band-limited with respect to $z_j$
if $\widehat{p}(\alpha) = 0$ whenever $\alpha_j < m$ or $\alpha_j > n$.
In other words, if we fix all variables but the $j$th one, then $p$ is $(m,n)$-band-limited
as a polynomial in $z_j$.

The following splitting lemma is crucial for establishing the upper bound of the Schur--Agler norm of homogeneous polynomials.

\begin{lemma}
  \label{lem:splitting}
  Let $p \in \mathbb{C}[z_1,\ldots,z_d]$ be an $(m,n)$-band-limited polynomial.
  Then there exist polynomials $p_1,\ldots,p_d$ such that
  \begin{enumerate}
    \item $p = \sum_{j=1}^d p_j$,
    \item each $p_j$ is $(\lfloor \frac{m}{2 d} \rfloor,n)$-band-limited with respect to $z_j$, and
    \item $\|p_j\|_\infty \lesssim_d \|p\|_\infty$ for each $j$.
  \end{enumerate}
\end{lemma}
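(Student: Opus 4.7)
The plan is to construct each $p_j$ by applying a suitable product of one-variable Fourier multipliers to $p$. The multipliers will be built from a de la Vall\'ee-Poussin kernel, whose bounded $L^1$-norm supplies the uniform control on supremum norms.

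Set $M := \lfloor m/(2d) \rfloor$, so $2 d M \le m$. Let $K_N$ denote the Fej\'er kernel of order $N$, and put $V := 2 K_{2M - 1} - K_{M-1}$. A routine computation shows
\begin{equation*}
  \widehat{V}(k) = 1 \text{ for } |k| \le M, \qquad \widehat{V}(k) = 0 \text{ for } |k| \ge 2M, \qquad \|V\|_{L^1(\mathbb{T})} \le 3.
\end{equation*}
For each $j \in \{1,\ldots,d\}$, let $L_j$ act on analytic polynomials on $\mathbb{D}^d$ by convolution with $V$ in the $j$-th variable, and set $R_j := I - L_j$. Both preserve the space of analytic polynomials; by the $L^1$ bound on $V$, I get $\|L_j g\|_\infty \le 3 \|g\|_\infty$ and $\|R_j g\|_\infty \le 4 \|g\|_\infty$ (with $\|\cdot\|_\infty$ the polydisc supremum); moreover $\widehat{R_j g}(\alpha) = 0$ whenever $\alpha_j \le M$, while $\widehat{L_j g}(\alpha) = 0$ whenever $\alpha_j \ge 2M$.

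The definition is then $p_j := L_1 L_2 \cdots L_{j-1} R_j\, p$. Since $L_j + R_j = I$, the partial sums telescope and yield
\begin{equation*}
  \sum_{j=1}^d p_j \;=\; \bigl( I - L_1 L_2 \cdots L_d \bigr)\, p.
\end{equation*}
The crux is that $L_1 \cdots L_d\, p = 0$: this operator is Fourier multiplication by $\prod_{j=1}^d \widehat{V}(\alpha_j)$, and for any $\alpha$ in the spectrum of $p$ we have $|\alpha| \ge m \ge 2 d M$, so the pigeonhole principle forces $\alpha_j \ge 2M$ for some $j$, where $\widehat{V}(\alpha_j) = 0$. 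This gives (1). Condition (2) is immediate: each $p_j$ inherits $|\alpha| \le n$ from $p$ (so $\alpha_j \le n$), while the factor $R_j$ kills modes with $\alpha_j \le M$, so $\alpha_j > M \ge \lfloor m/(2d) \rfloor$ on $\supp \widehat{p_j}$. Condition (3) follows by iterating the one-variable estimates: $\|p_j\|_\infty \le 3^{j-1}\cdot 4 \cdot \|p\|_\infty \lesssim_d \|p\|_\infty$.

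The only substantive ingredient is the existence of a one-variable multiplier that equals $1$ for $|k| \le M$, vanishes for $|k| \ge 2M$, and has $L^1$-norm bounded by an absolute constant — exactly what the de la Vall\'ee-Poussin kernel supplies. I do not expect any serious obstacle; the rest is checking the telescoping identity, applying pigeonhole to kill the full product, and verifying the frequency supports.
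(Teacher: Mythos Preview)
Your proof is correct and takes essentially the same approach as the paper: both split $p$ via a telescoping sum built from one-variable de la Vall\'ee-Poussin-type multipliers, use the $L^1$ bound on the kernel for sup-norm control, and invoke pigeonhole on the band-limiting to kill the remainder. The only cosmetic difference is that the paper convolves directly with a \emph{high-pass} trapezoid kernel supported in $(\tfrac{m}{2d},2n)$ (so its $p_j$ is $(p-p_1-\cdots-p_{j-1})*_j V$), whereas you use a \emph{low-pass} kernel $V$ and set $R_j=I-L_j$; your telescoping identity $\sum L_1\cdots L_{j-1}R_j = I - L_1\cdots L_d$ is exactly the paper's recursion unwound.
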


\begin{proof}
  We first informally describe the  basic idea. If $\widehat{p}(\alpha) \neq 0$, then $\alpha_j \ge \frac{m}{d}$ for
  some $j$, as $p$ is $(m,n)$-band-limited. So we should assign the monomial $\widehat{p}(\alpha) z^\alpha$
  to the polynomial $p_j$. In this way, one obtains a splitting that satisfies
  Conditions (1) and (2), even with $\frac{m}{d}$ in place of $\frac{m}{2 d}$. However, to maintain supremum norm control, we need to smoothen the cut-off.
  This will be achieved with the help of de la Vall\'ee-Poussin kernels.

  We now come to the actual proof.
  By replacing $m$ with $\lfloor \frac{m}{2 d} \rfloor 2 d$, we may assume that $\frac{m}{2 d}$ is an integer.
  For an integer $k \ge 2$, let $V_k$ be the real-valued trigonometric polynomial of one variable
  whose non-negative Fourier coefficients are the trapezoid-shaped function supported in $(\frac{m}{2d},kn)$
  that is identically one on $[\frac{m}{d},n]$ and affine on $[\frac{m}{2d},\frac{m}{d}]$
  and on $[n,kn]$.

  If $q \in \mathbb{C}[z_1,\ldots,z_d]$ is any polynomial, we write
  \begin{align*}
    (q *_j V_k)(z) &= \int_{\mathbb{T}} q(z_1,\ldots,z_{j-1}, z_j \overline{w}, z_{j+1},\ldots,z_d) V_k(w) \frac{|d w|}{2 \pi} \\
    &= \sum_{\alpha} \widehat{q}(\alpha) \widehat{V}_k(\alpha_j) z^\alpha.
  \end{align*}
  If $q$ has degree at most $n$, then $q *_j V_k$ is $(\frac{m}{2d},n)$-band-limited with respect to $z_j$
  and independent of $k$.
  Since $\|V_k\|_{L^1} \le 3 + \frac{k+1}{k-1}$ (see Lemma \ref{lem:dlvp_estimate} in the appendix),
  we have $\|q *_j V_k\|_\infty \le 4 \|q\|_\infty$.

  Recursively, we define $p_1 = p *_1 V$ and
  \begin{equation*}
    p_j = (p - p_{1} - \ldots - p_{j-1})*_j V_k
  \end{equation*}
  for $j=2,3,\ldots d$. Properties (2) and (3) are then clear.
  To show Property (1), let
  $q = p - p_1 - \ldots - p_{d-1}$. A simple induction argument shows that
  \begin{equation*}
    \widehat{q}(\alpha) = \widehat{p}(\alpha) \prod_{j=1}^{d-1} (1 - \widehat{V_k}(\alpha_j))
  \end{equation*}
  for all multi-indices $\alpha$.
  Thus, if $\alpha$ is a multi-index with $ \alpha_k \ge \frac{m}{d}$ for some $1 \le k \le d-1$,
  then $\widehat{q}(\alpha) = 0$.
  On the other hand, $q$ is $(m,n)$-band limited, so it follows
  that $\widehat{q}(\alpha) = 0$ if $\alpha_d < \frac{m}{d}$.
  Consequently,
  \begin{equation*}
    p_d = q *_d V_k = q,
  \end{equation*}
  which gives (1).
\end{proof}

The following result will imply both results mentioned in the introduction fairly easily.

\begin{theorem}
  \label{thm:SA_band_limited_estimate}
  Let $d \ge 3$ and let $p \in \mathbb{C}[z_1,\ldots,z_d]$ be $(m,n)$-band limited
  with $n \ge 1$.
  Then
  \begin{equation*}
    \|p\|_{\SA} \lesssim_d 
    \Big( \log \Big( \frac{n+1}{m+1} \Big) + 1 \Big) (\log(n+1))^{d-3} \|p\|_\infty.
  \end{equation*}
\end{theorem}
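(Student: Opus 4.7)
The plan is to combine Lemma \ref{lem:splitting} with the operator-coefficient inequality of Theorem \ref{thm:OVNI_sharp} and the polylogarithmic bound of Proposition \ref{prop:SA_log_bound}. The splitting lemma reduces the problem to polynomials that are band-limited in a single distinguished variable, which is exactly the setup where the one-variable constant $K(m',n)$ applies; Proposition \ref{prop:SA_log_bound} then disposes of the remaining $d-1$ variables in one shot.

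Concretely, first apply Lemma \ref{lem:splitting} to write $p = \sum_{j=1}^d p_j$, where each $p_j$ is $(m',n)$-band-limited in $z_j$ for $m' = \lfloor m/(2d) \rfloor$ and $\|p_j\|_\infty \lesssim_d \|p\|_\infty$. The convolutional construction of $p_j$ only kills Fourier coefficients of $p$, so each $p_j$ inherits the total-degree bound $\le n$. Fix a $d$-tuple $T$ of commuting contractions and regard $p_j$ as a polynomial in $z_j$ with operator coefficients in the $T_i$ for $i \neq j$; these coefficients commute with $T_j$, so Theorem \ref{thm:OVNI_sharp} yields
\begin{equation*}
 \|p_j(T)\| \le K(m',n) \sup_{z_j \in \mathbb{D}} \|p_j(T_1,\ldots,z_j,\ldots,T_d)\|.
\end{equation*}
For each fixed scalar $z_j \in \mathbb{D}$ the slice operator is the evaluation of a $(d-1)$-variable polynomial of degree $\le n$ at a commuting $(d-1)$-tuple, so Proposition \ref{prop:SA_log_bound} (applicable because $d-1 \ge 2$) bounds it by $\lesssim_d \log(n+1)^{d-3} \|p_j\|_\infty \lesssim_d \log(n+1)^{d-3} \|p\|_\infty$.

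To conclude, Theorem \ref{thm:OVNI_sharp} gives $K(m',n) \le \tfrac{1}{\pi}\log((n+1)/(m'+1)) + 2$, and the elementary estimate $m' + 1 \ge (m+1)/(2d)$ (a direct consequence of $\lfloor x \rfloor \ge x - 1$) yields $K(m',n) \lesssim_d \log((n+1)/(m+1)) + 1$. Summing $\|p_j(T)\|$ over $j = 1,\ldots,d$ and absorbing the factor $d$ into $\lesssim_d$ produces the claimed inequality. The main obstacle is really the splitting lemma, which is already in hand; with it the proof is a clean two-step reduction, and the only piece of bookkeeping to watch is that the factor $2d$ hidden inside $m'$ does not degrade the $\log((n+1)/(m+1))$ term, which is exactly the comparison just noted.
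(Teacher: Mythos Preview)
Your proposal is correct and follows essentially the same route as the paper: split via Lemma~\ref{lem:splitting}, apply the one-variable operator-coefficient bound $K(m',n)$ in the distinguished variable, and use Proposition~\ref{prop:SA_log_bound} on the remaining $d-1$ variables. The only cosmetic difference is the order in which you invoke the two estimates, and your justification that $m'+1 \ge (m+1)/(2d)$ is slightly glossed (it uses that $m$ is an integer, not merely $\lfloor x\rfloor \ge x-1$), but the inequality is true and the argument is sound.
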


\begin{proof}
  By Lemma \ref{lem:splitting}, we may split $p = \sum_{j=1}^d p_j$, where each $p_j$ is $(\lfloor \frac{m}{2d} \rfloor,n)$-band-limited with respect to $z_j$ and $\|p_j\|_\infty \lesssim_d \|p\|_\infty$.
  Write
  \begin{equation*}
    p_j(z) = \sum_{k= \lfloor \frac{m}{2d} \rfloor}^n q_{k j}(z_1,\ldots,z_{j-1},z_{j+1},\ldots,z_d) z_j^k,
  \end{equation*}
  where each $q_{kj}$ is a polynomial of degree at most $n$.
  Let $T$ be a $d$-tuple of commuting contractions.
  By Proposition \ref{prop:SA_log_bound}, we have
  \begin{equation*}
    \|p_j(T_1,\ldots,T_{j-1},z,T_{j+1},\ldots,T_d)\| \lesssim_d (\log(n+1))^{d-3} \|p_j\|_\infty
  \end{equation*}
  for all $z \in \mathbb{D}$, so by Theorem \ref{thm:OVNI_sharp} (or simply Remark \ref{rem:basic_bounds} (2)), we have
  \begin{align*}
    \|p_j(T)\| &\lesssim_d K \big( \lfloor \frac{m}{2d} \rfloor,n \big) (\log(n+1))^{d-3} \|p_j\|_\infty \\
               &\lesssim_d \Big( \log \Big( \frac{n+1}{\lfloor \frac{m}{2d} \rfloor+1} \Big) + 1 \Big) (\log(n+1))^{d-3} \|p_j\|_\infty \\
               &\lesssim_d \Big( \log \Big( \frac{n+1}{m+1} \Big) + 1 \Big) (\log(n+1))^{d-3} \|p_j\|_\infty.
  \end{align*}
  Recalling that $\|p_j\|_\infty \lesssim_d \|p\|_\infty$, and that $p = \sum_{j=1}^d p_j$,
  we find that
  \begin{equation*}
    \|p(T)\| \lesssim_d
    \Big( \log \Big( \frac{n+1}{m+1} \Big) + 1 \Big) (\log(n+1))^{d-3} \|p\|_\infty,
  \end{equation*}
  as desired.
\end{proof}

The desired upper bound for $C(d,n)$ follows immediately by taking $m=n$ above.
\begin{corollary}
  \label{cor:C(d,n)_upper_bound}
  Let $d \ge 3$. Then for all $n \ge 1$,
  \begin{equation*}
    C(d,n) \lesssim_d (\log(n+1))^{d-3}.
  \end{equation*}
  In particular, $\sup_{n} C(3,n) < \infty$.
\end{corollary}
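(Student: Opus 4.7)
The plan is to read off the corollary as an immediate specialization of Theorem \ref{thm:SA_band_limited_estimate}, since all the substantive work has already been done there. The observation to make is that a homogeneous polynomial $p \in \mathbb{C}[z_1,\ldots,z_d]$ of degree $n$ is $(n,n)$-band-limited in the sense of the previous section: every monomial $z^\alpha$ appearing in $p$ satisfies $|\alpha| = n$. Thus we are free to apply Theorem \ref{thm:SA_band_limited_estimate} with the choice $m=n$.

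Feeding $m=n$ into the band-limited estimate, the first factor collapses to $\log\bigl(\tfrac{n+1}{n+1}\bigr) + 1 = 1$, leaving
\begin{equation*}
  \|p\|_{\SA} \lesssim_d \log(n+1)^{d-3} \|p\|_\infty
\end{equation*}
for every homogeneous $p$ of degree $n$ and every $d \ge 3$. Taking the supremum over unit-norm homogeneous polynomials and over commuting $d$-tuples of contractions yields $C(d,n) \lesssim_d \log(n+1)^{d-3}$, which is the stated bound (the ``$+1$'' in the corollary is only needed to cover the exponent $d-3 = 0$ cleanly, but is harmless). For the ``in particular'' statement, specialize to $d=3$: the exponent becomes zero, so $\log(n+1)^{d-3} \equiv 1$ and hence $C(3,n)$ is bounded by a universal constant independent of $n$.

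There is essentially no obstacle at this stage: the splitting lemma, the inductive polylogarithmic bound of Proposition \ref{prop:SA_log_bound}, and the one-variable operator-coefficient bound of Theorem \ref{thm:OVNI_sharp} have already been combined inside Theorem \ref{thm:SA_band_limited_estimate}. The only thing one might pause over is whether the $m=n$ substitution is legitimate given that the theorem's constants involve $\lfloor m/(2d)\rfloor$; but this only enters through the quantity $\log\bigl(\tfrac{n+1}{\lfloor m/(2d)\rfloor+1}\bigr)+1$, which for $m=n$ is $\log(2d) + O(1)$, a $d$-dependent constant absorbed by $\lesssim_d$. Hence the corollary follows with no additional work.
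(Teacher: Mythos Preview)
Your proposal is correct and matches the paper's approach exactly: the paper simply notes that the corollary follows immediately from Theorem~\ref{thm:SA_band_limited_estimate} by taking $m=n$. Your final paragraph about $\lfloor m/(2d)\rfloor$ is unnecessary caution, since that quantity appears only inside the \emph{proof} of Theorem~\ref{thm:SA_band_limited_estimate}; the theorem's statement already gives the clean factor $\log\bigl(\tfrac{n+1}{m+1}\bigr)+1$, which becomes $1$ when $m=n$.
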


\begin{remark}
  \label{rem:explicit}
  (a)
  The proof of Corollary \ref{cor:C(d,n)_upper_bound} does yield a crude explicit estimate for $C(3,n)$.
  Let $p \in \mathbb{C}[z_1,z_2,z_3]$ be a homogeneous polynomial of degree $n$ with $\|p\|_\infty \le 1$.
  The proof of Lemma \ref{lem:splitting} shows that the splitting $p = p_1 + p_2 + p_3$
  obeys the bounds $\|p_1\|_\infty \le 4, \|p_2\|_\infty \le (4 + 1) 4 = 20$ and $\|p_3\|_\infty \le 1 + 4 + 20 = 25 $.
  Moreover, by Remark \ref{rem:basic_bounds} (3), we have $K(\lfloor \frac{n}{6} \rfloor, n) \le \sqrt{6}$.
  Thus, the proof of Theorem \ref{thm:SA_band_limited_estimate} yields $C(3,n) \le \sqrt{6} (4 + 20+25) = 49 \sqrt{6} \le 121$.
  Undoubtedly, this estimate can be significantly improved, but no attempt was made to do so.

  (b) Once again, one can define a completely bounded version $C_{\cb}(d,n)$ of the constant $C(d,n)$
  by considering matrices of homogeneous polynomials in place of ordinary homogeneous polynomials.
  The arguments above also yield, for each $d \ge 3$ and $n \ge 1$, the estimate
  \begin{equation*}
    C_{\cb}(d,n) \lesssim_d (\log(n+1))^{d-3}.
  \end{equation*}
  Indeed, by Remark \ref{rem:power_bounded_and_cb} (b), the upper bound in Theorem \ref{thm:OVNI_sharp}
  also holds in the completely bounded setting, and Proposition \ref{prop:SA_log_bound} and Lemma \ref{lem:splitting}
  extend to matrices of polynomials with essentially the same proofs.
  Hence the argument in Theorem \ref{thm:SA_band_limited_estimate} extends as well.
\end{remark}

The estimate in Theorem \ref{thm:SA_band_limited_estimate} can be converted into a Besov norm upper bound for the Schur--Agler norm.
If $d \ge 3$ and $f: \mathbb{D}^d \to \mathbb{C}$ is holomorphic,
let $(R f)(z) = \sum_{j=1}^d z_j \frac{\partial f}{\partial z_j}$ be the radial derivative. Let us define
\begin{equation*}
  \|f\|_d = |f(0)| + \int_{0}^1 \|(R f)_r\|_\infty \Big(\log\Big(\frac{1}{1-r} \Big) \Big)^{d - 3} \, dr.
\end{equation*}
If $f: \mathbb{D}^d \to \mathbb{C}$ is holomorphic, we define the Schur--Agler norm by $\|f\|_{\SA} = \sup\{\|f(T)\|\}$,
where the supremum is taken over all commuting $d$-tuples of strict contractions,
and $f(T)$ is (for instance) defined with the help of power series.
We say that $f$ belongs to the Schur--Agler algebra if $\|f\|_{\SA} < \infty$.

\begin{corollary}
  \label{cor:Besov_upper_bound}
  Let $d \ge 3$. If $f: \mathbb{D}^d \to \mathbb{C}$ is holomorphic and $\|f\|_d < \infty$,
  then $f$ belongs to the Schur-Agler algebra and
  \begin{equation*}
    \|f\|_{\SA} \lesssim_d \|f\|_d.
  \end{equation*}
\end{corollary}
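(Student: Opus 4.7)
The plan is to perform a dyadic Littlewood--Paley decomposition of $f$ according to the total degree of its monomials, estimate each block via Theorem \ref{thm:SA_band_limited_estimate}, and then recognise the resulting weighted sum as an equivalent norm on the Besov-type space defined by $\|\cdot\|_d$ using the material collected in Appendix \ref{sec:Besov}.

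First, as a reduction I would pass from an arbitrary $f$ with $\|f\|_d < \infty$ to functions holomorphic on a neighborhood of $\overline{\mathbb{D}^d}$ via the dilations $f_\rho(z) = f(\rho z)$: by dominated convergence $\|f - f_\rho\|_d \to 0$, and since the Schur--Agler norm is defined as a supremum over strict contractions, $\|f(T)\| = \lim_{\rho \nearrow 1}\|f_\rho(T)\|$ for any such $T$, so $\|f\|_{\SA} \le \liminf_{\rho \nearrow 1} \|f_\rho\|_{\SA}$.

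Next, I introduce the one-variable Fej\'er-type kernels $W_n$ from the appendix (with $\widehat{W}_n$ supported in $(2^{n-1},2^{n+1})$ for $n\ge1$, and the usual convention at $n=0$) and define the radial dyadic blocks
\begin{equation*}
  p_n(z) := \int_{\mathbb{T}} f(w z)\, \overline{W_n(w)}\,|dw| = \sum_{\alpha}\widehat{W}_n(|\alpha|)\,\widehat{f}(\alpha)\,z^\alpha.
\end{equation*}
Since $\sum_{n\ge 0}\widehat{W}_n(k)=1$ for every $k\ge 0$, one has $f = f(0) + \sum_{n\ge 0}p_n$ with absolute convergence under the standing assumption of holomorphy on a neighborhood of $\overline{\mathbb{D}^d}$, and each $p_n$ is a polynomial that is $(2^{n-1},2^{n+1})$-band-limited in the sense of Section \ref{sec:polydisc}. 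Applying Theorem \ref{thm:SA_band_limited_estimate} with $m = 2^{n-1}$, $n_{\text{top}} = 2^{n+1}$ and observing that $\log((n_{\text{top}}+1)/(m+1))=O(1)$, one obtains
\begin{equation*}
  \|p_n\|_{\SA} \lesssim_d (n+1)^{d-3} \|p_n\|_\infty \qquad (n\ge 1),
\end{equation*}
with a similar bound at $n=0$ from Proposition \ref{prop:SA_log_bound}. The triangle inequality then yields
\begin{equation*}
  \|f\|_{\SA} \le |f(0)| + \sum_{n\ge 0}\|p_n\|_{\SA} \lesssim_d |f(0)| + \sum_{n\ge 0}(n+1)^{d-3}\|p_n\|_\infty.
\end{equation*}

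The main remaining task, which I expect to be the principal technical step and the main obstacle, is to identify
\begin{equation*}
  |f(0)| + \sum_{n\ge 0}(n+1)^{d-3}\|p_n\|_\infty \lesssim |f(0)| + \int_{0}^1 \|(Rf)_r\|_\infty \log\!\Big(\tfrac{1}{1-r}\Big)^{d-3} dr.
\end{equation*}
This is a $\log^{d-3}$-weighted version of the standard Besov identification $\sum_n\|f*W_n\|_\infty \asymp \|f\|_{B^0_{\infty,1}}$ proved in the appendix for the unweighted case: the substitution $1-r = 2^{-n}$ sends $\log(1/(1-r))^{d-3}$ to $n^{d-3}$, while the contribution of $\|(Rf)_r\|_\infty$ to the integral on the dyadic annulus $r\in[1-2^{-n},1-2^{-n-1}]$ is comparable to $\|p_n\|_\infty$ by the standard reproducing/Paley--Wiener estimates for the Fej\'er blocks. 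I would carry out this identification by repeating the appendix's proof of Proposition \ref{prop:Besov_equiv} with the weight $\log(1/(1-r))^{d-3}$ inserted throughout. Combining this with the previous display gives $\|f\|_{\SA} \lesssim_d \|f\|_d$, and the opening reduction promotes the estimate to arbitrary $f$ with $\|f\|_d<\infty$.
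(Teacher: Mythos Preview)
Your proposal is correct and follows essentially the same route as the paper: decompose $f$ into dyadic blocks $f*W_n$, apply Theorem~\ref{thm:SA_band_limited_estimate} to each block, and then invoke the Besov-type equivalence from the appendix. Two minor remarks: first, the ``principal technical step'' you anticipate is already established in full generality in the appendix --- Proposition~\ref{prop:Besov_equiv} and Corollary~\ref{cor:Besov_polydisc} are stated and proved for an arbitrary weight exponent $a\ge 0$, so taking $a=d-3$ gives precisely the identification you need, with no need to repeat the argument; second, your decomposition $f=f(0)+\sum_{n\ge 0}p_n$ double-counts the constant term, since $\widehat{W}_0(0)=1$ already picks up $f(0)$ in $p_0$.
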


\begin{proof}
  We use the decomposition $f = \sum_{n=0}^\infty f * W_n$, which converges uniformly on compact subsets of
  ${\mathbb{D}}^d$; see the appendix.
  Let $T$ be a commuting tuple of strict contractions.
  Note that $f * W_n$ is $(2^{n-1},2^{n+1})$-band-limited for $n \ge 1$, so by Theorem \ref{thm:SA_band_limited_estimate}, we find that
  \begin{equation*}
    \|f(T)\| \le \sum_{n=0}^\infty \|(f * W_n)(T)\|
    \lesssim_d \sum_{n=0}^\infty (n+1)^{d-3} \|(f* W_n)\|_\infty.
  \end{equation*}
  By Corollary \ref{cor:Besov_polydisc}, the right-hand side is comparable to $\|f\|_d$,
  which gives the result.
\end{proof}

The bounds in von Neumann's inequality with operator coefficients used in the proof of Theorem \ref{thm:SA_band_limited_estimate} were essentially sharp; see Theorem \ref{thm:OVNI_sharp}.
One might therefore try to establish sharpness of Theorem \ref{thm:SA_band_limited_estimate} in a similar way.
Recall that the lower bound in von Neumann's inequality with operator coefficients
was achieved by Foguel--Hankel operators
of the form
\begin{equation*}
  \begin{bmatrix}
    M_\zeta & 0 \\ 0 & M_{\overline{\zeta}}^*
  \end{bmatrix} \quad \text{ and } \quad
  \begin{bmatrix}
    0 & 0 \\ H & 0
  \end{bmatrix},
\end{equation*}
where $M_\zeta$ is the unilateral shift on $H^2$ and $H: H^2 \to \overline{H^2}$ is a Hankel operator, i.e.\ $H M_\zeta = M_{\overline{\zeta}}^* H$.
It therefore seems natural to try to construct counterexamples for von Neumann's inequality in $d$-variables
using operators of this type. We will show that some natural operator tuples
built in this way in fact satisfy von Neumann's inequality with constant $1$.
We require the following standard lemma.

\begin{lemma}
  \label{lem:2_by_2_contraction}
  Let $V \in B(\mathcal{H})$ be an isometry, let $W \in B(\mathcal{K})$ be a co-isometry,
  let $H \in B(\mathcal{H},\mathcal{K})$ and let $r \in [0,\infty)$. Then the operator
  \begin{equation*}
    \begin{bmatrix}
      r V & 0 \\
      H & r W
    \end{bmatrix} \in B(\mathcal{H} \oplus \mathcal{K})
  \end{equation*}
  is a contraction if and only if $r^2 + \|H\| \le 1$.
\end{lemma}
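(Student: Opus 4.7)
The plan is to argue directly from the definition of contraction. Writing $X$ for the operator in question, since $V^*V = I$ we have
\begin{equation*}
\|X(\xi \oplus \eta)\|^2 = r^2 \|\xi\|^2 + \|H\xi + rW\eta\|^2
\end{equation*}
for all $\xi \in \mathcal{H}$, $\eta \in \mathcal{K}$. Thus $X$ is a contraction if and only if
\begin{equation*}
 \|H\xi + rW\eta\|^2 \le (1-r^2)\|\xi\|^2 + \|\eta\|^2 \quad (\ast)
\end{equation*}
for all $\xi,\eta$. We may assume $r \le 1$, since otherwise $\|X\| \ge \|rV\| = r > 1$ and also $r^2 + \|H\| > 1$, so both sides of the equivalence fail.

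For necessity, the idea is to use that $W$ is a co-isometry to eliminate it. Given arbitrary $\kappa \in \mathcal{K}$, set $\eta = W^*\kappa$; then $W\eta = WW^*\kappa = \kappa$, and $\|\eta\| = \|\kappa\|$ because $W^*$ is an isometry. Substituting into $(\ast)$ yields
\begin{equation*}
\|H\xi + r\kappa\|^2 \le (1-r^2)\|\xi\|^2 + \|\kappa\|^2
\end{equation*}
for all $\xi,\kappa$. Specializing $\kappa = tH\xi/\|H\xi\|$ (for $H\xi \ne 0$ and $t \ge 0$) converts this into the quadratic inequality $(1-r^2)t^2 - 2rt\|H\xi\| + (1-r^2)\|\xi\|^2 - \|H\xi\|^2 \ge 0$ in $t$, whose non-negative discriminant forces $\|H\xi\| \le (1-r^2)\|\xi\|$, i.e.\ $r^2 + \|H\| \le 1$.

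For sufficiency, assume $\|H\| \le 1-r^2$. Since $W$ is a contraction, the triangle inequality gives
\begin{equation*}
\|H\xi + rW\eta\| \le \|H\xi\| + r\|W\eta\| \le (1-r^2)\|\xi\| + r\|\eta\|.
\end{equation*}
Applying Cauchy--Schwarz to the vectors $(\sqrt{1-r^2},r)$ and $(\sqrt{1-r^2}\|\xi\|, \|\eta\|)$ then yields $(1-r^2)\|\xi\| + r\|\eta\| \le \sqrt{(1-r^2)\|\xi\|^2 + \|\eta\|^2}$, and squaring produces $(\ast)$. The only subtle step is recognizing how to exploit the asymmetric hypotheses: the isometry of $V$ eliminates $V$ immediately via $\|V\xi\| = \|\xi\|$, whereas the identity $WW^* = I$ is used to replace the image $W\eta$ by an arbitrary vector of the same norm in the necessity argument---without this trick, one would only get a weaker bound involving the partial isometry $W^*W$.
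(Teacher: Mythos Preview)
Your argument is essentially correct and takes a genuinely different route from the paper. The paper computes $I - T^*T$ as a $2\times 2$ block matrix and tests positivity via the Schur complement, using $WW^* = I$ to simplify the resulting expression to $(1-r^2)^{-1}\bigl((1-r^2)^2 - H^*H\bigr)$. Your approach is more elementary: you bypass block-matrix algebra entirely by working directly with norms, exploiting $V^*V = I$ to reduce to the scalar inequality $(\ast)$, and then handling necessity via a discriminant and sufficiency via Cauchy--Schwarz. Both proofs use the co-isometry hypothesis in the same essential way (to make $W\eta$ an arbitrary vector of the same norm), but yours avoids Schur complements at the cost of a small optimization argument.

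Two minor points deserve tightening. First, you write ``non-negative discriminant'', but what you need is that the discriminant be \emph{non-positive}: since the vertex $t = r\|H\xi\|/(1-r^2)$ lies in $[0,\infty)$, the quadratic is non-negative on $[0,\infty)$ only if its discriminant $4\|H\xi\|^2 - 4(1-r^2)^2\|\xi\|^2$ is $\le 0$, which gives the desired bound. Second, your discriminant argument tacitly assumes $r<1$ so that the leading coefficient $1-r^2$ is positive; the case $r=1$ should be noted separately (as the paper does): there, taking $\eta = 0$ in $(\ast)$ immediately forces $H\xi = 0$ for all $\xi$.
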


\begin{proof}
  Let $T$ denote the operator in the statement. Since $V$ is an isometry, it is clear that $r \le 1$ is necessary.
  If $r=1$, then $\|T\| \le 1$ if and only if $H = 0$. Thus, we may assume that $r \in (0,1)$.
  We let $I$ denote the identity operator on $\mathcal{H} \oplus \mathcal{K}$.
  Observe that $T$ is a contraction if and only if $I - T^* T \ge 0$.
  Now,
  \begin{equation*}
    I - T^* T
    =
    \begin{bmatrix}
      1 - r^2 - H^* H & - r H^* W \\
       - r W^* H & 1 -  r^2 W^* W
    \end{bmatrix}.
  \end{equation*}
  Taking Schur complements, we find that this operator is positive if and only if
  \begin{equation*}
    1 - r^2 - H^* H - r^2 H^* W (1 - r^2 W^* W)^{-1} W^* H \ge 0.
  \end{equation*}
  Since $W W^* = 1$, the left-hand side equals
  \begin{equation*}
    1 - r^2 - (1 + r^2 (1-r^2)^{-1}) H^* H = (1-r^2)^{-1} ( (1-r^2)^2 - H^* H).
  \end{equation*}
  Thus, $\|T\| \le 1$ if and only if $\|H\| \le 1 - r^2$.
\end{proof}

We can now show the announced result about tuples of Foguel--Hankel type.
Recall that the Hankel operator $H_q: H^2 \to \overline{H^2}$ is defined
by $\langle H_q f,g  \rangle = \langle f g,q \rangle$ for polynomials $p,q$.
Equivalently, $H_q f = P_{\overline{H^2}} ( \overline{q} f)$, where the product is taken in $L^2$.
We let $\Han \subset H^2$ denote the space of symbols of bounded Hankel operators.

\begin{proposition}
  \label{prop:Hankel_constant_1}
    Let $q_1,\ldots,q_d \in \Han$ and let $r_1,\ldots,r_d \in [0,1]$. For $j=1,\ldots,d$, let
      \begin{equation*}
        T_j =
        \begin{bmatrix}
          r_j M_{\zeta} & 0 \\ H_{q_j} & r_j M_{\overline{\zeta}}^*
        \end{bmatrix},
      \end{equation*}
      and assume that each $T_j$ is a contraction.
      Then the $T_j$ commute and for every $p \in \mathbb{C}[z_1,\ldots,z_d]$, we have
      \begin{equation*}
        \|p(T_1,\ldots,T_d)\| \le \|p\|_\infty.
      \end{equation*}
\end{proposition}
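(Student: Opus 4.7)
Commutativity of the $T_j$ is a direct block-matrix calculation: using the Hankel intertwining $W H_{q_j} = H_{q_j} V$, both $T_iT_j$ and $T_jT_i$ produce the lower-left block $(r_jH_{q_i} + r_iH_{q_j})V$, which is symmetric in $i,j$ (the diagonal blocks are manifestly so). For the von Neumann inequality, my plan is to realize $(T_1,\ldots,T_d)$ as the compression of a commuting tuple of contractive matrix-valued multiplication operators on $\mathcal K = L^2(\mathbb T)\otimes\mathbb C^2$. Using Nehari's theorem, I would choose $\phi_j\in L^\infty(\mathbb T)$ with $\|\phi_j\|_\infty = \|H_{q_j}\| \le 1-r_j^2$ and $H_{q_j} = P_{\overline{H^2}}M_{\phi_j}|_{H^2}$, and set
\[
u_j(\zeta) = \begin{bmatrix} r_j\zeta & 0 \\ \phi_j(\zeta) & r_j\zeta \end{bmatrix}, \qquad N_j = M_{u_j}.
\]
Applied pointwise in $\zeta$, Lemma~\ref{lem:2_by_2_contraction} gives $\|u_j(\zeta)\|\le 1$, and the matrices $u_j(\zeta)$ commute pointwise by direct computation, so the $N_j$ form a commuting tuple of contractions on $\mathcal K$.

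The subspace $\mathcal H = H^2\oplus\overline{H^2}\subset\mathcal K$ (with $H^2$ in the first and $\overline{H^2}$ in the second coordinate) is semi-invariant for $(N_1,\ldots,N_d)$: it equals $\mathcal M\ominus\mathcal N$ for the invariant subspaces $\mathcal M = H^2\oplus L^2$ and $\mathcal N = 0\oplus H^2$, each of which is easily checked to be invariant under every $N_j$. A direct calculation gives $P_{\mathcal H}N_j|_{\mathcal H} = T_j$, so Sarason's lemma (in the joint semi-invariant setting) yields $p(T) = P_{\mathcal H}\,p(N_1,\ldots,N_d)|_{\mathcal H}$ for every polynomial $p$. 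Consequently
\[
\|p(T)\| \le \|p(N_1,\ldots,N_d)\| = \operatorname*{ess\,sup}_{\zeta\in\mathbb T}\,\|p(u_1(\zeta),\ldots,u_d(\zeta))\|,
\]
so it suffices to bound the matrix norm pointwise in $\zeta$.

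Writing $u_j(\zeta) = r_j\zeta\,I + E_j(\zeta)$ with $E_i(\zeta)E_j(\zeta) = 0$, the same binomial identity used earlier for the $T_j$ gives
\[
p(u_1(\zeta),\ldots,u_d(\zeta)) = \begin{bmatrix} P(\zeta) & 0 \\ B(\zeta) & P(\zeta) \end{bmatrix},
\]
where $P(\zeta) = p(r_1\zeta,\ldots,r_d\zeta)$ and $B(\zeta) = \sum_j \phi_j(\zeta)\,\partial_j p(r_1\zeta,\ldots,r_d\zeta)$. An explicit eigenvalue calculation for the Gramian of this matrix shows that its operator norm is at most $\|p\|_\infty$ precisely when $|B(\zeta)|\,\|p\|_\infty \le \|p\|_\infty^2 - |P(\zeta)|^2$; using $|\phi_j(\zeta)|\le 1-r_j^2$ this reduces to the multivariable Schwarz--Pick inequality
\[
\sum_j (1-|z_j|^2)\,|\partial_j p(z)| \le \|p\|_\infty - \frac{|p(z)|^2}{\|p\|_\infty}
\]
evaluated at $z = (r_1\zeta,\ldots,r_d\zeta)\in\mathbb D^d$. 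This Schwarz--Pick estimate is the main analytic ingredient and the step I expect to be the crux of the argument; it follows from the Carath\'eodory/Kobayashi infinitesimal pseudometric on the polydisc, $|dp_z(v)|\,\|p\|_\infty/(\|p\|_\infty^2 - |p(z)|^2) \le \max_j|v_j|/(1-|z_j|^2)$, by optimizing over $v$ with $v_j = (1-|z_j|^2)\,\overline{\partial_j p(z)}/|\partial_j p(z)|$. The boundary case $r_j = 1$ forces $H_{q_j} = 0$ and is handled by approximation with $r_j < 1$.
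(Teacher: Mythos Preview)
Your approach is essentially the same as the paper's up to the final step: use Nehari to lift each $H_{q_j}$ to a multiplication operator $M_{\phi_j}$ on $L^2$, form the commuting contractions $N_j$ on $L^2\oplus L^2$, compress via semi-invariance to recover the $T_j$, and then reduce to a pointwise $2\times 2$ estimate. One small slip: your choice $\mathcal N = 0\oplus H^2$ does not give $\mathcal M\ominus\mathcal N = H^2\oplus\overline{H^2}$, since the constants lie in both $H^2$ and $\overline{H^2}$; you want $\mathcal N = 0\oplus \zeta H^2$ (equivalently $0\oplus\overline{H^2}^\perp$), which is still $N_j$-invariant and gives the correct compression $P_{\mathcal H}N_j|_{\mathcal H}=T_j$.

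The genuine difference is in the last step. The paper simply invokes the theorem of Drury and Holbrook that von Neumann's inequality holds for arbitrary commuting $2\times 2$ contractions, applied to $(u_1(\zeta),\ldots,u_d(\zeta))$. You instead exploit the special lower-triangular-with-equal-diagonal form of the $u_j(\zeta)$: writing $u_j=r_j\zeta I+E_j$ with $E_iE_j=0$ gives an explicit formula for $p(u_1,\ldots,u_d)$, and the resulting norm condition reduces to the infinitesimal Schwarz--Pick inequality on the polydisc, which you derive from the Kobayashi metric. Your argument is correct and is more self-contained, bypassing the Drury--Holbrook black box at the cost of only handling this particular class of $2\times 2$ tuples; the paper's route is shorter but imports a nontrivial result.
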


\begin{proof}
  The $T_j$ commute because of the relation $H_q M_\zeta = M_{\overline{\zeta}}^* H_q$
  for every $q \in \Han$.
  Let $U: L^2 \to L^2, (U f)(\zeta) = \zeta f(\zeta)$ denote the bilateral shift.
  Lemma \ref{lem:2_by_2_contraction} shows
  that $\|H_{q_j}\| \le 1 - r_j^2$ for each $j$.
  By Nehari's theorem (\cite{Nehari57}, see also \cite[Theorem 1.1]{Peller03}),
  there exist $h_j \in L^\infty(\mathbb{T})$ such that $\|h_j\|_\infty \le 1 - r_j^2$
  and such that $H_{q_j} f = P_{\overline{H^2}} (h_j f)$ for all $f \in H^2$.
  Let $U_j: L^2 \to L^2, U_j(g) = h_j g$.
  Then
  \begin{equation*}
    N_j =
    \begin{bmatrix}
      r_j U & 0 \\ U_j & r_j U
    \end{bmatrix} \in B(L^2 \oplus L^2)
  \end{equation*}
  are commuting contractions by Lemma \ref{lem:2_by_2_contraction}. They dilate $T_1,\ldots,T_d$
  in the sense that
  \begin{equation*}
    p(T_1,\ldots,T_d) =
    P_{H^2 \oplus \overline{H^2}} p(N_1,\ldots,N_d) \big|_{H^2 \oplus \overline{H^2}}
  \end{equation*}
  for every polynomial $p \in \mathbb{C}[z_1,\ldots,z_d]$.
  This can for instance be seen
  by noting that the equality holds for $p = z_j$ and that $H^2 \oplus \overline{H^2} = (H^2 \oplus L^2) \ominus (0 \oplus \overline{H^2}^\bot)$ is semi-invariant under $N_1,\ldots,N_j$.
  The entries of $N_j$ are multiplication operators on $L^2$, so
  \begin{align*}
    \|p(T_1,\ldots,T_d)\| &\le \|p(N_1,\ldots,N_d)\| \\
    &\le \sup_{\zeta \in \mathbb{T}}
    \Big\|
    p \Big(
    \begin{bmatrix}
      r_1 \zeta & 0 \\ h_1(\zeta) & r_1 \zeta
    \end{bmatrix},
    \ldots,
    \begin{bmatrix}
      r_d \zeta & 0 \\ h_d(\zeta) & r_d \zeta
  \end{bmatrix} \Big) \Big\|.
  \end{align*}
  Since von Neumann's inequality holds for commuting $2 \times 2$ contractions (see \cite{Drury83} or \cite{Holbrook92}),
  this last quantity is dominated by $\|p\|_\infty$.
\end{proof}

More generally, one might consider Hankel operators on the polydisc.
The following result shows in particular that also in this case,
von Neumann's inequality holds, at least up to a constant.

\begin{proposition}
  \label{prop:Hankel_general_constant}
  Let $V_1,\ldots,V_d \in B(\mathcal{H})$ be commuting isometries
  and let $W_1,\ldots,W_d \in B(\mathcal{K})$ be commuting co-isometries.
  Let $H_1,\ldots,H_d \in B(\mathcal{H},\mathcal{K})$ be operators satisfying $H_i V_j = W_j H_i$
  for $i,j=1,\ldots,d$. Let $r_1, \ldots,r_d \in [0,1]$ and assume that
  the commuting operators
  \begin{equation*}
    T_j =
    \begin{bmatrix}
      r_j V_j & 0 \\ H_j & r_j W_j
    \end{bmatrix} \in B(\mathcal{H} \oplus \mathcal{K})
  \end{equation*}
  are contractions. Then
  \begin{equation*}
    \|p(T_1,\ldots,T_d)\| \le (d+1) \|p\|_\infty
  \end{equation*}
  for all $p \in \mathbb{C}[z_1,\ldots,z_d]$.
\end{proposition}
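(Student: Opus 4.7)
The plan is to make the $2\times 2$ block structure of $p(T_1,\ldots,T_d)$ fully explicit, and then bound the diagonal and off-diagonal blocks separately; the off-diagonal block will turn out to be a ``derivative'' $\sum_j (\partial_j p)(rW) H_j$, to which I can apply Schwarz--Pick and the Hankel-type estimate $\|H_j\| \le 1 - r_j^2$ supplied by Lemma \ref{lem:2_by_2_contraction}. First I would write $T_j = D_j + N_j$ with $D_j = \begin{bmatrix} r_j V_j & 0 \\ 0 & r_j W_j \end{bmatrix}$ and $N_j = \begin{bmatrix} 0 & 0 \\ H_j & 0 \end{bmatrix}$. The $N_j$ satisfy $N_i N_j = 0$, and a direct computation shows that the hypothesis $H_i V_j = W_j H_i$ is precisely what is needed to make each $D_k$ commute with each $N_j$. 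A binomial expansion then gives $(D_j + N_j)^{\alpha_j} = D_j^{\alpha_j} + \alpha_j D_j^{\alpha_j-1} N_j$ (the higher terms vanish because $N_j^2 = 0$), and in the product $T^\alpha = \prod_j T_j^{\alpha_j}$ only terms containing at most one $N$-factor survive. Collecting the surviving terms and summing against $\widehat{p}(\alpha)$ yields
\[
p(T) = \begin{bmatrix} p(rV) & 0 \\ \sum_{j=1}^d (\partial_j p)(rW)\, H_j & p(rW) \end{bmatrix},
\]
where $rV = (r_1 V_1,\ldots, r_d V_d)$ and $rW = (r_1 W_1,\ldots,r_d W_d)$.

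Second, I would control the diagonal. Since $V_1,\ldots,V_d$ are commuting isometries, It\^o's theorem provides a joint unitary dilation, so $\|p(rV)\| \le \sup_{z \in \mathbb{D}^d} |p(rz)| \le \|p\|_\infty$; passing to adjoints gives the same bound for $\|p(rW)\|$, since $W_j^*$ are commuting isometries. Thus the diagonal block operator has norm at most $\|p\|_\infty$. For the off-diagonal block, Lemma \ref{lem:2_by_2_contraction} applied to $T_j$ already gives $\|H_j\| \le 1 - r_j^2$, so it suffices to show $\|(\partial_j p)(rW)\| \le \|p\|_\infty / (1 - r_j^2)$ whenever $r_j < 1$ (if $r_j = 1$, then $H_j = 0$ and the $j$-th summand drops out). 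Using polynomial functional calculus on the commuting co-isometry tuple $W$, one has $\|q(rW)\| \le \sup_{z \in \mathbb{D}^d} |q(rz)|$ for any polynomial $q$; applied to $q = \partial_j p$ and combined with the one-variable Schwarz--Pick estimate $|(\partial_j p)(w)| \le \|p\|_\infty / (1 - |w_j|^2)$ in the $j$-th coordinate, this gives the required bound.

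Putting the pieces together, each of the $d$ summands of the off-diagonal block has norm at most $\|p\|_\infty$, so the off-diagonal contribution is at most $d\|p\|_\infty$ and the total bound is $(d+1)\|p\|_\infty$. The main obstacle is really the block-formula step: it depends on the cancellation $N_i N_j = 0$ together with the twisted commutation $D_k N_j = N_j D_k$ that follows from $H_i V_j = W_j H_i$. Once the formula is in hand, the remainder is a clean assembly of It\^o-type dilation for commuting (co-)isometries and the one-variable Schwarz--Pick inequality applied coordinatewise; no substantive multivariable von Neumann input is used.
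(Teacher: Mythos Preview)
Your proof is correct and follows essentially the same route as the paper: establish the block formula for $p(T)$, bound the diagonal via It\^o's theorem on commuting isometries, and bound the off-diagonal via Lemma~\ref{lem:2_by_2_contraction} combined with the one-variable Schwarz--Pick estimate. The only cosmetic difference is that you write the $(2,1)$ entry as $\sum_j (\partial_j p)(rW)\,H_j$ whereas the paper writes it as $\sum_j H_j\,(\partial_j p)(rV)$; these coincide by the intertwining relations $H_j V_k = W_k H_j$, and your $D+N$ expansion is a clean way to derive what the paper obtains by induction on monomials.
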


\begin{proof}
  Let $p \in \mathbb{C}[z_1,\ldots,z_d]$. We claim that
  \begin{equation}
    \label{eqn:Foguel-Hankel}
    p(T_1,\ldots,T_d)
    =
    \begin{bmatrix}
      p(r_1 V_1, \ldots, r_d V_d) & 0 \\
      \sum_{j=1}^d H_j \frac{\partial p}{\partial z_j} (r_1 V_1,\ldots, r_d V_d) & p(r_1 W_1,\ldots, r_d W_d)
    \end{bmatrix}.
  \end{equation}
  Indeed, a simple induction argument using the intertwining relations $H_i V_j = W_j H_i$ shows that this formula holds for all monomials,
  hence it holds for all polynomials by linearity.
  It is a result of It\^o \cite{Ito58}, see also \cite[Theorem 5.1]{Paulsen02}, that
  commuting isometries extend to commuting unitaries, hence
  $\|p(r_1 V_1, \ldots, r_d V_d)\| \le \|p\|_\infty$
  and similarly $\|p(r_1 W_1,\ldots,r_d W_d)\| \le \|p\|_\infty$.
  Applying the same result to $\frac{\partial p}{\partial z_j}$ and
  then using the classical Schwarz--Pick lemma, we find that
  \begin{align*}
    \Big\| \frac{\partial p}{\partial z_j} (r_1 V_1,\ldots,r_d V_d) \Big\|
    &\le \sup \Big\{ \Big| \frac{\partial p}{\partial z_j}(\zeta_1,\ldots,\zeta_d) \Big|: |\zeta_j| \le r_j \Big\} \\
    &\le \frac{1}{1-r_j^2}  \|p\|_\infty,
  \end{align*}
  provided that $r_j < 1$.
  Lemma \ref{lem:2_by_2_contraction} shows that $\|H_j\| \le 1 - r_j^2$ for each $j$; whence
  \begin{equation*}
    \Big\|
      \sum_{j=1}^d H_j \frac{\partial p}{\partial z_j} (r_1 V_1,\ldots, r_d V_d) \Big\| \le d \|p\|_\infty.
  \end{equation*}
  Since the norms of the diagonal entries of \eqref{eqn:Foguel-Hankel} are bounded by $\|p\|_\infty$,
  the desired estimate follows from the triangle inequality.
\end{proof}

\appendix

\section{Besov spaces}
\label{sec:Besov}

In this appendix, we collect a few results about Besov spaces of analytic functions.
Results of this type are in principle well known, see for instance \cite[Section 3.1]{Nikolski05},
\cite[Section 2]{Peller82} for analytic Besov spaces and \cite{Peetre76,Triebel78} for more general Besov spaces.
However, we require vector-valued and weighted versions of the standard results,
for which we do not have a reference. Thus, we provide the proofs.

We will make use of several integral kernels on $\mathbb{T}$.
For integers $n \ge 1$, the Fej\'er kernel $F_n$ is the real-valued trigonometric polynomial whose Fourier coefficients are the triangular-shaped function supported in $(-n,n)$ that is $1$ at $0$ and 
  affine on $[-n,0]$ and on $[0,n]$.
  Explicitly
  \begin{equation*}
    F_n(z) = \sum_{|j| \le n} \Big( 1 - \frac{|j|}{n} \Big) z^j.
  \end{equation*}
  Next, for integers $0 \le k < l \le m < n$, the de la Vall\'ee-Poussin-type kernel $V_{k,l,m,n}$ is defined to be the real-valued trigonometric polynomial
whose non-negative Fourier coefficients are the trapezoid-shaped function
supported in $(k,n)$ that is identically $1$ on $[l,m]$ and affine on $[k,l]$ and
$[m,n]$.
Explicitly,
\begin{equation*}
  V_{k,l,m,n}(z) = \sum_{k \le |j| < l} \frac{|j|-k}{l-k} z^j + \sum_{l \le |j| \le m} z^j + \sum_{m < |j| \le n} \frac{n-|j|}{n-m} z^j.
\end{equation*}
Finally, we will also use closely related holomorphic kernels $W_n$, defined by demanding
the Fourier coefficients of $W_n$ are the triangular-shaped function supported in $(2^{n-1}, 2^{n+1})$ that takes the value $1$ at $2^n$ and is affine on $[2^{n-1},2^n]$ and $[2^n, 2^{n+1}]$.
Explicitly,
\begin{equation*}
  W_n(z) = \sum_{j=2^{n-1}}^{2^n} \Big( \frac{j}{2^{n-1}} - 1 \Big) z^j
  + \sum_{j=2^{n} + 1}^{2^{n+1}} \Big(2 - \frac{j}{2^n} \Big) z^j.
\end{equation*}
We also set $W_0(z) = 1 + z$.

We recall the following standard and well-known estimates for these kernels.
\begin{lemma}
  \label{lem:dlvp_estimate}
  For all integers $0 \le k < l \le m < n$, the following estimate holds:
  \begin{equation*}
    \|V_{k,l,m,n}\|_{L^1} \le \frac{n+m}{n-m} + \frac{l+k}{l-k}.
  \end{equation*}
  Moreover, for all integers $n \ge 0$,
  \begin{equation*}
    \|W_n\|_{L^1} \le \frac{3}{2}.
  \end{equation*}
\end{lemma}

\begin{proof}
  It is well known that the Fej\'er kernel satisfies the estimate $\|F_n\|_{L^1} \le 1$.
  Let
  \begin{equation*}
    G_{m,n} = \frac{n}{n-m} F_n - \frac{m}{n-m} F_m.
  \end{equation*}
  Then the Fourier coefficients of $G_{m,n}$ are the trapezoid-shaped function supported in $(-n,n)$ that is identically $1$ on $[-m,m]$ and affine on $[-n,-m]$ and $[m,n]$. Thus,
  \begin{equation*}
    V_{k,l,m,n} = G_{m,n} - G_{k,l},
  \end{equation*}
  so the estimate for $V_{k,l,m,n}$ follows from the triangle inequality.

  Similarly, 
  $W_n = z^{2^n} F_{2^{n-1}} + \frac{1}{2} z^{3 \cdot 2^{n-1}} F_{2^{n-1}}$
  for $n \ge 1$, which yields the estimate for $W_n$; the case $n=0$
  is an elementary computation.
\end{proof}

Let $X$ be a Banach space. Given a holomorphic function $f: \mathbb{D} \to X$,
we write $\|f\|_\infty = \sup_{ z \in \mathbb{D}} \|f(z)\|_X$, $f_r(z) = f(r z)$ and $f'_r(z) = f'(r z)$.
We also write the Taylor series of $f$ as
\begin{equation*}
  f(z) = \sum_{n=0}^\infty \widehat{f}(n) z^n,
\end{equation*}
where $\widehat{f}(n) \in X$.

We require the following standard inequalities,
which include versions of Bernstein's inequality; see for instance \cite[Section I.8]{Katznelson04}
and \cite{QZ19}.
\begin{lemma}
  \label{lem:Bernstein}
  Let $X$ be a Banach space and let $f: \mathbb{D} \to X$ be holomorphic.
  Let $n \in \mathbb{N}$ and $0 < r < 1$.
  \begin{enumerate}[label=\normalfont{(\alph*)}]
    \item If $\supp \widehat{f} \subset [0,n]$, then $\|f\|_\infty \le r^{-n} \|f_r\|_\infty$
      and $\|f'\|_\infty \lesssim n \|f\|_\infty$.
    \item If $\supp \widehat{f} \subset [n,\infty]$, then $\|f_r\|_\infty \le r^n \|f\|_\infty$
      and $n \|f\|_\infty \lesssim \|f'\|_\infty$.
  \end{enumerate}
\end{lemma}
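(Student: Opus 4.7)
The plan is to prove the four inequalities separately, handling the vector-valued feature either by Hahn--Banach reduction to the scalar case (when a classical scalar inequality is available) or by direct arguments based on Schwarz's lemma and an explicit integral identity.

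For part (a), the second inequality $\|f'\|_\infty \lesssim n\|f\|_\infty$ is the classical Bernstein inequality for polynomials of degree $\le n$; I would reduce to the scalar case by noting $\|f'(z)\| = \sup_{\|\varphi\|\le 1} |\varphi(f'(z))|$ for $\varphi \in X^*$ and applying the scalar Bernstein estimate to the polynomial $\varphi \circ f$. For the first inequality, the plan is to prove the auxiliary outside-the-disc bound $\|p(z)\| \le |z|^n \|p\|_\infty$ for every polynomial $p\colon \mathbb{D} \to X$ of degree $\le n$ and every $|z| \ge 1$. This I would obtain via the reverse polynomial $\tilde{p}(z) = z^n p(1/z) = \sum_{k=0}^n a_{n-k} z^k$, which has the same supremum norm as $p$ on $\mathbb{T}$ (hence on $\overline{\mathbb{D}}$ by the maximum principle), together with the identity $p(z) = z^n \tilde{p}(1/z)$. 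Applying this auxiliary bound to $g(z) = f(rz)$ at the point $z_0/r$ for $z_0 \in \mathbb{D}$ yields $\|f(z_0)\| = \|g(z_0/r)\| \le |z_0/r|^n \|g\|_\infty \le r^{-n} \|f_r\|_\infty$ when $|z_0/r| \ge 1$, and the bound is trivial otherwise.

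For part (b), I would first write $f(z) = z^n h(z)$, where $h$ is holomorphic on $\mathbb{D}$ because $\widehat{f}$ vanishes on $[0,n)$. A Schwarz-lemma argument applied to $h = f/z^n$ gives $\|h\|_\infty = \|f\|_\infty$, and then the first inequality is immediate: $\|f(rz)\| = r^n |z|^n \|h(rz)\| \le r^n \|f\|_\infty$.

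The main obstacle is the second inequality of (b), namely $n\|f\|_\infty \lesssim \|f'\|_\infty$. The key computation is $f'(z) = z^{n-1}(nh(z) + zh'(z))$; setting $\tilde{h} = nh + zh'$ one has $\|\tilde{h}\|_\infty \le \|f'\|_\infty$ (indeed, $\|f'\| = \|\tilde{h}\|$ on $\mathbb{T}$, and the maximum principle applied to the holomorphic function $\tilde{h}$ transfers this to the interior). The operator $h \mapsto (n + z\,d/dz)h$ acts as multiplication by $n+k$ on the $k$th Taylor coefficient, so its inverse corresponds to Hadamard multiplication by $(n+k)^{-1}$; computing this inverse as an integral yields
\begin{equation*}
  h(z) = \int_0^1 t^{n-1} \tilde{h}(tz)\, dt,
\end{equation*}
which can be verified at once by expanding the Taylor series. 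This identity gives $\|h\|_\infty \le \|\tilde{h}\|_\infty \int_0^1 t^{n-1}\, dt = \|\tilde{h}\|_\infty / n$, and combining with $\|f\|_\infty = \|h\|_\infty$ and $\|\tilde{h}\|_\infty \le \|f'\|_\infty$ completes the proof (in fact with constant $1$ rather than the weaker $\lesssim$).
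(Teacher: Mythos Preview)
Your proposal is correct and follows essentially the same route as the paper. The paper likewise reduces to the scalar case via linear functionals, proves the first inequality in (a) by the maximum principle applied to $w \mapsto w^n f(z/w)$ (which is just your reverse-polynomial trick in disguise), writes $f = z^n h$ for (b), and for the second inequality in (b) uses the integral $f(z) = \int_0^1 f'(rz)\,z\,dr$ together with the first inequality of (b) applied to $f'$; unwinding this gives exactly your identity $h(z) = \int_0^1 t^{n-1}\tilde h(tz)\,dt$, so your ``invert $n + z\,d/dz$'' framing is a repackaging of the same computation.
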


\begin{proof}
  By composing $f$ with continuous linear functionals on $X$, it suffices to consider the case $X = \mathbb{C}$.

  (a) The first inequality follows from the maximum principle,
  applied to the function $w \mapsto w^n f(\frac{z}{w})$ on the circle of radius $\frac{1}{r}$.
  The second inequality (with constant $1$) is Bernstein's inequality. With a worse
  constant, it also follows from the first inequality, applied to $f'$, and the Schwarz--Pick lemma by
  choosing $r = 1 - \frac{1}{n}$.

  (b) Write $f = z^n g$. Then $\|f_r\|_\infty \le r^n \|g\|_\infty = r^n \|f\|_\infty$, which is the first inequality.
  The second inequality is a special case of the reverse Bernstein inequality.
  It also can be seen from the first inequality as follows. Let $n \ge 1$, so $f(0) = 0$. We have
  \begin{equation*}
    f(z) = f(0) + \int_{0}^1 f'(r z) z \, dr,
  \end{equation*}
  so
  \begin{equation*}
    \|f\|_\infty \le \int_{0}^1 \|f_r'\|_\infty \, dr \le \|f'\|_\infty \int_{0}^1 r^{n-1} \, dr
    = \frac{1}{n} \|f'\|_\infty. \qedhere
  \end{equation*}
\end{proof}

We also require the following basic asymptotic relation.

\begin{lemma}
  \label{lem:integral_asympt}
  Let $a \ge 0$. Then for all $N \in \mathbb{N}, N \ge 1$,
  \begin{equation*}
    N \int_0^1 r^N \Big( \log \Big( \frac{1}{1-r} \Big) \Big)^a \, dr \simeq_a (\log(N+1) )^a.
  \end{equation*}
\end{lemma}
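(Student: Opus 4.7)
The plan is to exploit the fact that, for large $N$, the weight $N r^N$ concentrates sharply around $r = 1$ on a scale of $1/N$. Substituting $u = 1 - r$, the left-hand side becomes $N \int_0^1 (1-u)^N \log(1/u)^a \, du$, and the task reduces to estimating this integral from above and below.

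For the upper bound, I would bound $(1-u)^N \le e^{-Nu}$ and then substitute $t = Nu$ to obtain
\begin{equation*}
  N \int_0^1 (1-u)^N \log(1/u)^a \, du \le \int_0^N e^{-t} \bigl( \log N + \log(1/t) \bigr)^a \, dt.
\end{equation*}
I would split the range of integration at $t = 1$. On $[1, N]$ the logarithm $\log(N/t)$ is at most $\log N$, so that part contributes at most $(\log N)^a \int_1^\infty e^{-t} \, dt$. On $(0, 1]$ I would use the elementary estimate $(A+B)^a \lesssim_a A^a + B^a$ for nonnegative $A, B$ to separate the two logarithms, noting that $\int_0^1 e^{-t} \log(1/t)^a \, dt$ is a finite constant depending only on $a$. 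Combining the two contributions gives $\lesssim_a (\log N)^a + 1$, which is $\lesssim_a \log(N+1)^a$ since $N \ge 1$.

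For the lower bound, I would restrict the integral to the window $r \in [1 - 1/N, 1 - 1/(2N)]$. On this window $1 - r \in [1/(2N), 1/N]$, so $\log(1/(1-r)) \ge \log N$; also $r^N \ge (1-1/N)^N \ge 1/4$ for all $N \ge 2$; and the window has length $1/(2N)$. Therefore
\begin{equation*}
  N \int_0^1 r^N \log(1/(1-r))^a \, dr \;\ge\; N \cdot \tfrac{1}{4} \cdot (\log N)^a \cdot \tfrac{1}{2N} \;\gtrsim_a\; \log(N+1)^a,
\end{equation*}
where I use that $\log N \ge \tfrac{1}{2} \log(N+1)$ for $N \ge 2$. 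The single remaining case $N = 1$ is handled by direct inspection, since both sides are then finite positive constants depending only on $a$.

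I do not expect a genuine obstacle; the argument is standard once the correct scaling is identified. The conceptual point behind both bounds is that $N r^N \, dr$ is essentially a probability distribution concentrated in a window of length $\simeq 1/N$ around $r = 1 - 1/N$, on which the weight $\log(1/(1-r))^a$ is comparable to $\log(N+1)^a$; the only real work is confirming that the contribution from outside that window, where $r^N$ is small, is controlled by the same quantity.
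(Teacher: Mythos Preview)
Your proof is correct and follows essentially the same approach as the paper: both argue via concentration of $N r^N\,dr$ near $r=1$ at scale $1/N$, obtain the lower bound by restricting to a window of that size, and obtain the upper bound by splitting at $r=1-1/N$ (your $t=1$ after the substitution $t=N(1-r)$) and controlling each piece separately. The only cosmetic differences are your use of the bound $(1-u)^N\le e^{-Nu}$ and the change of variables, where the paper works directly in $r$ and uses the trivial bounds $r^N\le 1$ and $\int_0^{1-1/N} r^N\,dr\le 1/N$.
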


\begin{proof}
  Lower bound: By monotonicity, we have
  \begin{align*}
    N \int_0^1 r^N \Big(\log \Big( \frac{1}{1-r} \Big) \Big)^a \,d r
    &\ge N \int_{1 - \frac{1}{N}}^1 r^N \Big(\log \Big( \frac{1}{1-r} \Big) \Big)^a \, dr \\
    &\ge \Big(1 - \frac{1}{N} \Big)^N (\log(N))^a \\
    &\gtrsim (\log(N))^a.
  \end{align*}
  Upper bound: We break up the domain of integration into two intervals, namely $[0,1-\frac{1}{N}]$
  and $[1-\frac{1}{N},1]$. For the first integral, we estimate
  \begin{equation*}
    N \int_0^{1- \frac{1}{N}} r^N \Big(\log \Big( \frac{1}{1-r} \Big)\Big)^a \, dr
    \le N \int_{0}^{1-\frac{1}{N}} r^N (\log(N))^a \, dr \le (\log(N))^a.
  \end{equation*}
  For the second integral, we use the substitution $s = N (r-1) + 1$ to compute
  \begin{align*}
    N \int_{1-\frac{1}{N}}^1 r^N \Big(\log \Big(\frac{1}{1-r} \Big)\Big)^a \, dr
    &\le
    N \int_{1-\frac{1}{N}}^1 \Big(\log \Big(\frac{1}{1-r} \Big)\Big)^a \, dr \\
    &= \int_0^1 \Big( \log \Big(\frac{N}{1-s} \Big)\Big)^a \, ds \\
    &\lesssim_a (\log(N))^a + \int_0^1 \Big(\log\Big( \frac{1}{1-s} \Big)\Big)^a \,ds.
  \end{align*}
  The second summand is a constant only depending on $a$, which gives the upper bound.
\end{proof}

If $f: \mathbb{D} \to X$ is holomorphic with Taylor series
$f(z) = \sum_{k=0}^\infty \widehat{f}(k) z^n$, where $\widehat{f}(k) \in X$, define
\begin{equation*}
  (f*W_n)(z) = \sum_{k=0}^\infty \widehat{f}(k) \widehat{W}_n(k) z^k,
\end{equation*}
which is in fact a finite sum. Since $\sum_{n=0}^\infty \widehat{W}_n(k) = 1$ for all $k$,
it is easy to check that
\begin{equation*}
  \sum_{n=0}^\infty f * W_n = f,
\end{equation*}
where the convergence is uniform on compact subsets of $\mathbb{D}$.
We also write $(R f)(z) = z f'(z)$.

We can now establish a dyadic description of a Besov-type norm.

\begin{proposition}
  \label{prop:Besov_equiv}
  Let $a \ge 0$ and $f: \mathbb{D} \to X$ be holomorphic.
  Then
  \begin{align*}
    & \|f(0)\| + \int_0^1 \|f'_r\|_\infty \Big(\log \Big( \frac{1}{1-r} \Big)\Big)^a \, d r \\
    \simeq
    \, &\|f(0)\| + \int_0^1 \|(R f)_r\|_\infty \Big(\log \Big( \frac{1}{1-r} \Big)\Big)^a \, d r \\
    \simeq_a &\sum_{n=0}^\infty (n+1)^a \|f * W_n\|_\infty.
  \end{align*}
\end{proposition}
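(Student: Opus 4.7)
The plan is to prove the equivalence in two directions using the dyadic-type decomposition $f = \sum_n f \ast W_n$, together with the Bernstein-type estimates of Lemma \ref{lem:Bernstein} and the weighted integral asymptotic of Lemma \ref{lem:integral_asympt}. Throughout, write $g_n = f \ast W_n$, which for $n \ge 1$ has Fourier support contained in $[2^{n-1}, 2^{n+1}]$.

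For the upper bound LHS $\lesssim_a$ RHS, I would use $f' = \sum_n g_n'$, which converges uniformly on compact subsets. For $n \ge 1$, $g_n'$ has Fourier support in $[2^{n-1}-1, 2^{n+1}-1]$, so Lemma \ref{lem:Bernstein}(a) yields $\|g_n'\|_\infty \lesssim 2^n \|g_n\|_\infty$, and Lemma \ref{lem:Bernstein}(b) applied to $g_n'$ yields $\|(g_n')_r\|_\infty \le r^{2^{n-1}-1} \|g_n'\|_\infty$. Integrating against $\log(1/(1-r))^a\,dr$ and invoking Lemma \ref{lem:integral_asympt} with $N = 2^{n-1}-1$ then shows that the contribution of $g_n$ to the integral on the LHS is $\lesssim_a 2^n \|g_n\|_\infty \cdot n^a \cdot 2^{-n} \simeq n^a \|g_n\|_\infty$. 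Summing over $n \ge 1$, handling the small cases $n = 0,1$ by elementary bounds, and using $\|f(0)\| \le \|g_0\|_\infty$, yields this direction.

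For the lower bound RHS $\lesssim_a$ LHS, the key step is a pointwise estimate $\|g_n\|_\infty \lesssim 2^{-n}\|f'_r\|_\infty$ for $n \ge 1$ and $r$ in a suitable interval. I would combine direct and reverse Bernstein: Lemma \ref{lem:Bernstein}(a) gives $\|g_n\|_\infty \le r^{-2^{n+1}}\|(g_n)_r\|_\infty$, while Lemma \ref{lem:Bernstein}(b) applied to $(g_n)_r$ (of spectrum $\subset [2^{n-1}, 2^{n+1}]$) gives $2^{n-1}\|(g_n)_r\|_\infty \lesssim \|((g_n)_r)'\|_\infty = r\,\|(g_n)'_r\|_\infty$. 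Since $(g_n)'_r = f'_r \ast V_n$ with $V_n(z) = W_n(z)/z$ satisfying $\|V_n\|_{L^1} = \|W_n\|_{L^1} \lesssim 1$, we obtain $\|(g_n)'_r\|_\infty \lesssim \|f'_r\|_\infty$. Restricting $r$ to $J_n := [1 - 2^{-n}, 1 - 2^{-n-1}]$ makes $r^{-2^{n+1}}$ uniformly bounded (via $(1-2^{-n})^{-2^n} \le 4$), so altogether $\|g_n\|_\infty \lesssim 2^{-n}\|f'_r\|_\infty$ on $J_n$.

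To finish, average this pointwise estimate over $J_n$ (of length $2^{-n-1}$) to get $\|g_n\|_\infty \lesssim \int_{J_n}\|f'_r\|_\infty\,dr$, then multiply by $(n+1)^a$ and use $(n+1)^a \simeq_a \log(1/(1-r))^a$ uniformly for $r \in J_n$. Summing over $n \ge 1$ (the $J_n$ tile $[1/2, 1)$ up to measure zero) gives $\sum_{n \ge 1}(n+1)^a\|g_n\|_\infty \lesssim_a \int_{1/2}^1\|f'_r\|_\infty \log(1/(1-r))^a\,dr$. The $n=0$ term is absorbed via $\|g_0\|_\infty \le |f(0)| + |f'(0)|$, where $|f(0)|$ appears explicitly in the LHS and $|f'(0)| \le \|f'_r\|_\infty$ for every $r > 0$ by the maximum modulus principle, so $|f'(0)| \lesssim_a \int_0^{1/2}\|f'_r\|_\infty \log(1/(1-r))^a\,dr$. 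The main technical obstacle is the delicate interplay of direct and reverse Bernstein that produces a pointwise $2^{-n}$ decay for $\|g_n\|_\infty$, combined with the choice of $J_n$ so that $r^{-2^{n+1}}$ remains uniformly $O(1)$ while simultaneously matching the logarithmic weight $\log(1/(1-r))^a$ to the frequency index $n$.
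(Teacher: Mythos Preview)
Your proposal is correct and follows essentially the same approach as the paper: both directions use the dyadic decomposition $f=\sum_n f*W_n$, the Bernstein estimates of Lemma~\ref{lem:Bernstein}, Lemma~\ref{lem:integral_asympt} for the upper bound, and the dyadic intervals $J_n=[1-2^{-n},1-2^{-n-1}]$ for the lower bound. The only cosmetic difference is that the paper convolves $f'_r$ directly with $W_n$ (whose Fourier support sits in $[2^{n-1},2^{n+1}]$), whereas you differentiate $f*W_n$ first and therefore introduce the shifted kernel $V_n=W_n/z$; since $\|V_n\|_{L^1}=\|W_n\|_{L^1}$ on $\mathbb{T}$, this changes nothing of substance.
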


\begin{proof}
  Note that $\|(R f)_r\|_\infty = r \|f'_r\|_\infty$,
  and that $\|f'_r\|_\infty$ is increasing in $r$ by the maximum principle.
  So the integrals over $[0,1]$ are comparable to the respective integrals over $[\frac{1}{2},1]$.
  Hence the first two quantities are comparable.
  
  Next, we obtain an upper bound for the second quantity in terms of the third.
  It is clear that $\|f(0)\| \le \|f * W_0\|_\infty$.
  By monotone convergence, we have
   \begin{equation*}
     \int_0^1 \|(R f)_r\|_\infty \Big(\log \Big(\frac{1}{1-r} \Big)\Big)^a \, dr \le \sum_{n=0}^\infty \int_0^1 \|(R f)_r * W_n\|_\infty \Big(\log \Big(\frac{1}{1-r} \Big)\Big)^a \, dr.
   \end{equation*}
   Applying Lemma \ref{lem:Bernstein} (b) and then (a), we find that for $n \ge 1$,
   \begin{equation*}
     \|(R f)_r * W_n\|_\infty \le r^{2^{n-1}} \|(R f) * W_n\|_\infty
     \lesssim r^{2^{n-1}} 2^{n+1} \|f * W_n\|_\infty.
   \end{equation*}
   Integration and Lemma \ref{lem:integral_asympt} yield
   \begin{align*}
      &\int_{0}^1 \|(R f)_r * W_n\|_\infty \Big(\log\Big( \frac{1}{1-r} \Big)\Big)^a \, dr \\
     \lesssim & \, 2^{n+1} \int_{0}^1 r^{2^{n-1}} \Big(\log \Big(\frac{1}{1-r} \Big)\Big)^a \, dr \|f * W_n\|_\infty \\
     \lesssim_a & \, (n+1)^a \|f * W_n\|_\infty,
   \end{align*}
   which is also valid for $n=0$. Thus,
   \begin{equation*}
     \int_{0}^1 \|(R f)_r\|_\infty \, d r \lesssim \sum_{n=0}^\infty (n+1)^a \|f * W_n\|_\infty.
   \end{equation*}

   For the lower bound, we use the fact that  $\|g * W_n \|_\infty \le \frac{3}{2} \|g\|_\infty$
   as $\|W_n\|_{L^1} \le \frac{3}{2}$ to obtain
   \begin{align*}
     &\int_0^1 \|(R f)_r \|_\infty  \Big(\log \Big( \frac{1}{1-r} \Big)\Big)^a \, dr \\
     \ge
     &\sum_{n=1}^\infty \int_{1-2^{-n}}^{1 - 2^{-n-1}}
     \|(R f)_r\|_\infty  \Big(\log \Big( \frac{1}{1-r} \Big)\Big)^a \, dr \\
    \gtrsim
    &\sum_{n=1}^\infty \int_{1-2^{-n}}^{1 - 2^{-n-1}}
     \|(R f)_r * W_n\|_\infty  \Big(\log \Big( \frac{1}{1-r} \Big)\Big)^a \, dr.
    \end{align*}
   Using Lemma \ref{lem:Bernstein} (a) and then (b), we estimate
   \begin{equation*}
     \|(R f)_r * W_n\|_\infty \ge r^{2^{n+1}} \|(R f)* W_n\|_\infty
     \gtrsim r^{2^{n+1}} 2^{n-1} \|f * W_n\|_\infty
   \end{equation*}
   for $n \ge 1$. It follows that
   \begin{align*}
     &\int_{1-2^{-n}}^{1 - 2^{-n-1}}
     \|(R f)_r * W_n\|_\infty  \Big(\log \Big( \frac{1}{1-r} \Big)\Big)^a \, dr \\
     \gtrsim &\|f * W_n\|_\infty 2^{n-1}
     \int_{1-2^{-n}}^{1 - 2^{-n-1}} r^{2^{n+1}} \Big(\log \Big( \frac{1}{1-r} \Big)\Big)^a \, dr \\
     \gtrsim_a &\|f * W_n\|_\infty  \, n^a.
   \end{align*}
   Here, the last estimate is obtained by bounding below the integrand by a constant.
   Hence,
   \begin{equation*}
     \int_0^1 \|(R f)_r \|_\infty \Big(\log \Big( \frac{1}{1-r} \Big)\Big)^a \, dr \gtrsim_a \sum_{n=1}^\infty (n+1)^a \|f * W_n\|_\infty.
   \end{equation*}
   To deal with the summand for $n=0$,
   we use a simple integration and the maximum principle to see that
   \begin{align*}
     \|f\|_\infty \le \|f(0)\| + \int_0^1 \|f_r'\|_\infty \ dr
     &\lesssim \|f(0)\| + \int_{1-\frac{1}{e}}^1 \|(R f)_r\|_\infty \, dr \\
     &\le \|f(0)\| + \int_0^1 \|(R f)_r\|_\infty \Big(\log \Big(\frac{1}{1-r} \Big)\Big)^a \, dr.
   \end{align*}
   Since $\|f * W_0\|_\infty \lesssim \|f\|_\infty$, the result follows.
\end{proof}

If $f: \mathbb{D}^d \to \mathbb{C}$ is holomorphic, we let $(R f)(z) = \sum_{j=1}^d z_j \frac{\partial f}{\partial z_j}$ be the radial derivative.
We also set
\begin{equation*}
  (f * W_n)(z) = \sum_{\alpha} \widehat{f}(\alpha) W_n(|\alpha|) z^\alpha.
\end{equation*}

\begin{corollary}
  \label{cor:Besov_polydisc}
  Let $a \ge 0$ and $f: \mathbb{D}^d \to \mathbb{C}$ be holomorphic.
  Then
  \begin{equation*}
    |f(0)| + \int_0^1 \|(R f)_r\|_\infty \Big(\log \Big( \frac{1}{1-r} \Big)\Big)^a \, d r
    \simeq_a \sum_{n=0}^\infty (n+1)^a \|f * W_n\|_\infty.
  \end{equation*}
\end{corollary}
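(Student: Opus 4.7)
My plan is to reduce the polydisc statement to the one-variable Banach-space-valued equivalence in Proposition \ref{prop:Besov_equiv} by lifting $f$ to an operator-valued holomorphic curve. Set $X = H^\infty(\mathbb{D}^d)$ and define $F \colon \mathbb{D} \to X$ by $F(w)(z) = f(wz)$. A short Cauchy estimate shows that $F$ is norm-holomorphic on $\mathbb{D}$, with Taylor expansion $F(w) = \sum_{k=0}^\infty w^k F_k$ in $X$, where $F_k \in X$ denotes the $k$-homogeneous part of $f$.

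Next I would identify each quantity appearing in Proposition \ref{prop:Besov_equiv} applied to $F$ with the corresponding polydisc quantity for $f$. Clearly $\|F(0)\|_X = |f(0)|$. For the dyadic pieces, the homogeneity identity $w^k F_k(z) = F_k(wz)$ gives $(F*W_n)(w)(z) = (f*W_n)(wz)$, and taking the supremum over $w \in \mathbb{D}$ and $z \in \mathbb{D}^d$ collapses, by rotation invariance of the polydisc, to $\|F*W_n\|_\infty = \|f*W_n\|_\infty$. For the derivative, the same trick yields $wF'(w)(z) = (Rf)(wz)$, whence $\|F'(w)\|_X = \|(Rf)_{|w|}\|_\infty/|w|$; since $w \mapsto \|F'(w)\|_X$ is subharmonic and rotation invariant, $r \mapsto \|(Rf)_r\|_\infty/r$ is increasing, and in particular $\|F_r'\|_\infty = \|(Rf)_r\|_\infty/r$. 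Substituting these into Proposition \ref{prop:Besov_equiv} yields
\begin{equation*}
  |f(0)| + \int_0^1 \frac{\|(Rf)_r\|_\infty}{r} \log\Big(\frac{1}{1-r}\Big)^a \, dr \simeq_a \sum_{n=0}^\infty (n+1)^a \|f*W_n\|_\infty.
\end{equation*}

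What remains, and what I expect to be the main technical step, is to remove the factor $1/r$ inside the integral. One direction is immediate since $1/r \ge 1$ on $(0,1)$. For the reverse I would split at $r = 1/2$. On $[1/2, 1]$ the factor $1/r$ is bounded by $2$, giving comparability directly. On $[0, 1/2]$ the weight $\log(1/(1-r))^a$ is bounded above by $(\log 2)^a$, while the monotonicity of $r \mapsto \|(Rf)_r\|_\infty/r$ established above bounds this ratio by $2\|(Rf)_{1/2}\|_\infty$; a further appeal to the monotonicity of $r \mapsto \|(Rf)_r\|_\infty$ combined with the lower bound $\log(1/(1-r))^a \ge (\log 2)^a$ on $[1/2, 1]$ converts the resulting estimate into a constant multiple of $\int_{1/2}^1 \|(Rf)_r\|_\infty \log(1/(1-r))^a \, dr$. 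Assembling the two pieces yields the desired comparison of the two integrals up to a universal constant, from which the corollary follows.
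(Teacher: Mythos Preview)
Your approach is essentially the paper's: lift $f$ to a Banach-space-valued holomorphic function of one variable and invoke Proposition~\ref{prop:Besov_equiv}. The paper uses $X = C(\mathbb{T}^d)$ rather than $H^\infty(\mathbb{D}^d)$, but by the maximum modulus principle these yield identical sup norms, so the choice is immaterial.

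The one substantive difference lies in the derivative computation. The paper asserts $g'(w)(z) = (Rf)(wz)$ and concludes $\|g'_r\|_\infty = \|(Rf)_r\|_\infty$ directly; however the chain rule actually gives $w\,g'(w)(z) = (Rf)(wz)$, so the honest relation is $\|g'_r\|_\infty = \|(Rf)_r\|_\infty/r$, exactly as you write. Your final step---removing the factor $1/r$ by splitting at $r=\tfrac12$, using that $r\mapsto \|(Rf)_r\|_\infty/r$ is nondecreasing (via subharmonicity and rotation invariance of $\|F'(\cdot)\|_X$), and absorbing the $[0,\tfrac12]$ contribution into $\int_{1/2}^1$---is correct and is precisely what is needed to reconcile the two integrals up to a constant depending only on $a$. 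So your argument follows the same route as the paper's and in fact patches a small slip in it.
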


\begin{proof}
  Define $g: \mathbb{D} \to C(\mathbb{T}^d)$ by
  \begin{equation*}
    g(w)(z) = f(w z) \quad (z \in \mathbb{T}^d, w \in \mathbb{D}).
  \end{equation*}
  It is easy to check that $g$ is weakly holomorphic, hence holomorphic.
  Thus, it follows from Proposition \ref{prop:Besov_equiv} that
  \begin{equation*}
    \|g(0)\|
    + \int_0^1 \|(R g)_r\|_\infty \Big(\log \Big( \frac{1}{1-r} \Big)\Big)^a \, d r
    \simeq_a \sum_{n=0}^\infty (n+1)^a \|g * W_n\|_\infty.
  \end{equation*}
  It remains to compute  both sides. Clearly, $\|g(0)\| = |f(0)|$.
  Moreover,
  \begin{equation*}
    (R g)(w)(z) = w \frac{d}{dw} f(w z) = w \sum_{j=1}^d z_j \frac{\partial f}{\partial z_j}(w z)
    = (R f)(w z),
  \end{equation*}
  so
  \begin{equation*}
    \|(R g)_r\|_\infty = \sup_{w \in \mathbb{D}} \sup_{z \in \mathbb{T}^d} |(R g)(r w)(z)|
    = \sup_{z \in \mathbb{D}^d} | (R f)(r z)| = \|(R f)_r\|_\infty.
  \end{equation*}
  On the other hand,
  \begin{equation*}
    g(w)(z) = \sum_{\alpha} \widehat{f}(\alpha) (w z)^\alpha
    = \sum_{k=0}^\infty \sum_{|\alpha|=k} \widehat{f}(\alpha) z^\alpha w^k
  \end{equation*}
  so
  \begin{equation*}
    (g*W_n)(w)(z) = \sum_{k=0}^\infty \sum_{|\alpha|=k} \widehat{f}(\alpha) \widehat{W}_n(k)(z w)^\alpha 
    = (f*W_n)(z w)
  \end{equation*}
  and thus
  \begin{equation*}
    \| g * W_n\|_\infty = \|f* W_n\|_\infty.
  \end{equation*}
  This completes the proof.
\end{proof}

\bibliographystyle{amsplain}
\bibliography{literature}

\end{document}